\numberwithin{equation}{section}
\newtheorem{theorem}{Theorem}[section]
\newtheorem{prop}[theorem]{Proposition}
\newtheorem{lemma}[theorem]{Lemma}
\theoremstyle{definition}
\newcommand{\E}{\mathbf{E}}
\newcommand{\R}{\mathbb{R}}
\newcommand{\Z}{\mathbb{Z}}
\newcommand{\N}{\mathbb{N}}
\newcommand{\eps}{\varepsilon}
\newcommand{\Prob}[1]{\mathbf P\{#1\}}
\begin{document}
 	
 	\title[Large Degrees in Scale-Free Random Graphs]{Large Degrees in Scale-Free Inhomogeneous Random Graphs}
 	
 	\author[C.\ Bhattacharjee]{Chinmoy Bhattacharjee}
	\address{Department of Mathematics,
	University of Luxembourg}
 	\email{chinmoy.bhattacharjee@uni.lu}

 	\author[M.\ Schulte]{Matthias Schulte}
	\address{Institute of Mathematics,
	Hamburg University of Technology}
 	\email{matthias.schulte@tuhh.de}

 	\date{\today}
 	
 	
 	\subjclass[2010]{Primary: 60G70, 05C80, Secondary: 60F05, 05C82, 60G55}
 	\keywords{Random graphs, maximum degree, Poisson process convergence, Hill estimator}


\begin{abstract} We consider a class of scale-free inhomogeneous random graphs, which includes some long-range percolation models. We study the maximum degree in such graphs in a growing observation window and show that its limiting distribution is Frechet. We achieve this by proving convergence of the underlying point process of the degrees to a certain Poisson process. Estimating the index of the power-law tail for the typical degree distribution is an important question in statistics. We prove consistency of the Hill estimator for the inverse of the tail exponent of the typical degree distribution.
\end{abstract}

\maketitle

\section{Introduction}

In this paper we study the behaviour of the large degrees for a certain class of scale-free inhomogeneous random graphs. In particular, we investigate the asymptotics for the maximum degree in such random graphs and prove consistency of the Hill estimator.

Large graphs with a highly non-trivial structure, so-called complex networks, arise in many different fields, ranging from natural to social sciences. Prominent examples of complex networks include the internet, the World Wide Web and social networks like Facebook, Twitter and so on. A key observation from network science is that many real-world complex networks show a scale-free or power-law behaviour, which essentially means that the proportion of vertices with degree greater than some positive integer $k$ is proportional to $k^{-\gamma}$ for some positive exponent $\gamma$ for large values of $k$. Thus, the degrees in a scale-free network have a large variability and there are a few vertices, known as hubs, with very large degrees. For more details on scale-free complex networks, we refer the reader to e.g.\ \cite{H16,Voitalov18}.

In order to model complex networks, one typically considers random graphs which share similar characteristics, in particular, the scale-free property. While the empirical degree distribution is a natural object to study in complex networks, for random graphs, it is often easier to deal with the typical degree, the degree of a uniformly chosen vertex. Since for random graphs, the empirical degree distribution is usually similar to the distribution of the typical degree, one is led to consider random graph models in which the typical degree $D$ satisfies
$$
\mathbf{P}\{D > k\} \sim c k^{-\gamma} \quad \text{as} \quad k\to\infty 
$$
for some positive constants $c$ and $\gamma$. In the sequel, we consider a more general class of degree distributions by allowing that
\begin{equation}\label{eqn:Degree}
\mathbf{P}\{D>u\}=\ell(u) u^{-\gamma}, \quad u> 0,
\end{equation}
where $\ell: [0,\infty)\to[0,\infty)$ is a slowly varying function, which means that for all $a>0$, $\lim_{u\to\infty} \ell(au)/\ell(u)=1$. We call a random graph scale-free if \eqref{eqn:Degree} is satisfied. 

Two random graphs which are ubiquitous in the literature are the Erd\H{o}s-R\'enyi graph and the random geometric graph. In the Erd\H{o}s-R\'enyi graph the vertices are given by $[n]=\{1,\hdots,n\}$ and each pair of distinct vertices is connected by an edge with probability $p\in(0,1)$ independently from the others. The random geometric graph, on the other hand, is obtained by taking the points of a stationary Poisson process $X$ in $\mathbb{R}^d$ with unit intensity as vertices and connecting two distinct vertices $x$ and $y$ by an edge whenever $|x-y|\leq r$ for a fixed parameter $r>0$. For the Erd\H{o}s-R\'enyi graph with $p=\lambda/n$ for $\lambda>0$, it is easy to see that the distribution of the typical degree converges weakly to a Poisson distribution with mean $\lambda$ as $n\to\infty$. In case of the random geometric graph, the typical degree (whose exact definition requires the Palm distribution) is $\operatorname{Poisson}(v_d r^d)$ distributed, where $v_d$ is the volume of the $d$-dimensional unit ball. So neither the Erd\H{o}s-R\'enyi graph, nor the random geometric graph is scale-free.

It is however possible to construct scale-free random graphs in a similar way as the Erd\H{o}s-R\'enyi graph or the random geometric graph by using non-negative independent and identically distributed (i.i.d.) weights whose distribution function $F(\cdot)$ is of the form
$$
1-F(u)= u^{-\beta} L(u), \quad u > 0,
$$ 
with a slowly varying function $L(\cdot)$ and $\beta>0$. A random variable with such a distribution function is said to have a regularly varying tail distribution with exponent $\beta$. An example of a scale-free random graph is the Norros-Reittu model introduced in \cite{NR06}, where one takes the vertices $[n]$, which are equipped with i.i.d.\ weights $(W_x)_{x\in[n]}$ distributed according to $F(\cdot)$. One then joins $E_n\{x,y\}=E_n\{y,x\}$ many edges between $x,y\in[n]$, where $(E_n\{x,y\})_{1\leq x\leq y \leq n}$ are independent and
$$
E_n\{x,y\}=_d\operatorname{Poisson}\left(W_{x} W_y/\sum_{z\in [n]} W_z\right).
$$
The resulting random multigraph with self-loops is a scale-free counterpart to the Erd\H{o}s-R\'enyi graph, where the typical degree has the same tail behaviour as the weights. Another such model is the so-called scale-free continuum percolation model considered in \cite{DW18}, which has the points of the stationary Poisson process $X$ as its vertices with associated i.i.d.\ weights $(W_x)_{x\in X}$ distributed according to $F(\cdot)$. An edge between distinct $x,y\in X$ is drawn independently with probability
$$
p_{xy}=1-\exp\left(-\lambda\frac{W_xW_y}{|x-y|^\alpha} \right)
$$  
with $\lambda,\alpha>0$. The resulting random graph is scale-free and can be seen as a scale-free counterpart to the random geometric graph.

In this paper, we study a general class of scale-free random graphs, which includes the Norros-Reittu model and the scale-free continuum percolation model. All these models are constructed using i.i.d.\ weights with a regularly varying tail distribution, such that vertices with large weights tend to have large degrees. Such random graph models are also called inhomogeneous random graphs. Some of the random graph models considered here, such as the scale-free continuum percolation model, are so-called long-range percolation models. In these models, the geometry of the underlying space becomes crucial since the probability that two vertices are connected by an edge depends on their distance as well.   

The first aim of this paper is to study the large degrees for a general class of scale-free inhomogeneous random graphs. Since some of them are infinite, we only consider the degrees of vertices within some finite observation window. For increasing observation windows, we prove that the maximum degree converges, after a rescaling, to a Frechet-distributed random variable. More generally, we establish the convergence of the rescaled degree sequences of the random graphs to a certain Poisson process. The behaviour of the maximum degree for the random graphs considered in this paper is significantly different from that of the maximum degree of an Erd\H{o}s-R\'enyi graph or a random geometric graph, which is concentrated with a high probability on at most two consecutive numbers (see \cite[Theorem 3.7]{Bollobas} and \cite[Theorem 6.6]{P03} or \cite{M08}, respectively). More recently, such a concentration phenomenon was also shown for the maximum degree of a Poisson-Delaunay graph in \cite{BC18}.

The second aim of this paper is to estimate the exponent $\gamma$ in \eqref{eqn:Degree}, an important statistical question (see \cite{Voitalov18} for a discussion of the problem and several estimators). A standard approach from extreme value theory to estimate the tail exponent of a regularly varying distribution is using the so-called \textit{Hill estimator}. For non-negative i.i.d.\ random variables $(X_i)_{1\le i \le n}$ with their common distribution having a regularly varying tail with exponent $\gamma>0$, the Hill estimator for $1/\gamma$ based on the $k$ upper-order statistics is defined as 
$$
H_{k,n}:=\frac{1}{k}\sum_{i=1}^k \log \frac{X_{(i)}}{X_{(k+1)}},
$$
where $X_{(1)} \ge X_{(2)}\ge \dots \ge X_{(n)}$ are the order statistics corresponding to the data $(X_i)_{1\le i \le n}$. It is well-known (see e.g.\ \cite[Theorem 4.2]{Re07}) that if $1 \le k=k_n < n$ is chosen so that $k_n\to \infty$ and $k_n/n \to 0$ as $n \to \infty$, then
$$H_{k_n,n} \xrightarrow{\mathbf{P}} \frac{1}{\gamma} \quad \text{as $n \to \infty$},$$
where $\xrightarrow{\mathbf{P}}$ denotes convergence in probability. In order to estimate the tail exponent $\gamma$ of the degree distribution, one can treat the degrees as i.i.d.\ random variables and apply the Hill estimator. This was proposed in \cite{Clausetetal2009} together with a minimum distance selection procedure to choose the parameter $k$ (see \cite{BhattacharyaChenHofstadZwart, Drees2018} for some theoretical results on this selection procedure). However, since the degrees of the vertices are usually not independent, the consistency result above is not applicable in the context of random graphs. An exception is the configuration model (see e.g.\ \cite{H16}), where the degrees are i.i.d.\ by construction. In \cite{RW19,RW18} the consistency of the Hill estimator for linear preferential attachment models is shown, see \cite{Drees2018,WanWangDavisResnick} for related simulation studies. But apart from that, we are not aware of any other results in this direction. We address this question by establishing consistency of the Hill estimator for a large class of scale-free inhomogeneous random graphs. We believe that such consistency results are important for network science since they justify to some extent the use of the Hill estimator for the analysis of real-world complex networks. 

The main challenge in proving our results is that the degrees of the vertices are not independent. We resolve this problem by approximating the degrees of the vertices in terms of their weights, which are a family of i.i.d.\ random variables. As a byproduct of our approach, we obtain a one-to-one correspondence between the vertices with large degrees and those with large weights.

The rest of the paper is organised as follows. In Section~\ref{sec:2}, we introduce the class of scale-free inhomogeneous random graphs we are interested in and provide some examples, before presenting our main results. Section~\ref{sec:proofs} is devoted to the proof of the abstract results, which are applied to some particular random graph models in Section~\ref{sec:app}.

\subsection*{Notation} Throughout, we denote by $(k_n)_{n \in \N}$ an \textit{intermediate sequence}, that is a positive integer sequence with $k_n < n$ for all $n \in\N$, $k_n \to \infty$ and $k_n/n \to 0$ as $n \to \infty$, while $\mathbf{1}$ stands for the sequence $(1,1,1,\cdots)$. We denote by $M_+((0,\infty])$ the set of all non-negative Radon measures on $(0,\infty]$ equipped with the \textit{vague} topology and by $M_p((0,\infty])$ the subset of point measures on $(0,\infty]$. Let $\delta_{x}$ stand for the Dirac delta measure with point mass at $x$. 
We denote by $\xrightarrow{d}$ and $\xrightarrow{\mathbf{P}}$ convergence in distribution and in probability, respectively. For two real valued functions $f(\cdot)$ and $g(\cdot)$ on $\R$, we write $f(x) \sim g(x)$ as $x \to \infty$ when $\lim_{x \to \infty}f(x)/g(x)=1$. For a random variable $Z$ with distribution function $F(\cdot)$ we write $Z\sim F$. We denote by $\mathrm{RV}_\rho$ the set of real-valued functions on $\R$ which are regularly varying with index $\rho\not=0$.

\section{Main results}\label{sec:2} We start with a short informal description of our model in a general setup and provide some examples. We consider a sequence of random graphs $(G_n)_{n\in \N}$. For each $n \in \N$, the set of vertices $V_n$ of $G_n$ is an at most countably infinite set (possibly random) in some space $S$. The vertices of $G_n$ are independently marked with weights $(W_x)_{x\in V_n}$ which are non-negative i.i.d.\ random variables with common distribution $F(\cdot)$. The probability that two vertices $x,y\in V_n$ are connected by an edge depends on their weights $W_x$ and $W_y$ (and sometimes on their distance). Throughout this paper, we will be interested in the following random graph models that are of the form described above:\\

\noindent\textbf{I. Scale-free percolation models on $\Z^d$.} We consider an inhomogeneous random graph model for long-range percolation on the lattice $\Z^d$, $d \in \N$, as introduced in \cite{DHH13}. We let $G_n=G$ for all $n \in \N$, where the random graph $G$ with vertex set $\mathbb{Z}^d$ is constructed as follows. Given the weights $(W_x)_{x\in\mathbb{Z}^d}$, for distinct $x,y \in \mathbb{Z}^d$, we add the edge $\{x,y\}$ in $G$ independently of the other edges with probability
\begin{equation}\label{eq:ConPr}
p_{xy}=1-\exp\left(-\lambda \frac{W_x W_y}{|x-y|^\alpha}\right)
\end{equation}
for parameters $\alpha,\lambda \in (0,\infty)$ and $|\cdot|$ denoting the Euclidean norm. The idea behind \eqref{eq:ConPr} is that the probability of an edge appearing in the graph is decreasing in the distance between the endpoints, while large weights at either endpoint increase the probability. \\

\noindent\textbf{II. Ultra-small scale-free geometric networks on $\Z^d$.} This model was considered in \cite{Y06} in the context of long-range percolation. We again consider the vertex sets $V_n=\Z^d$, $n \in \N$, and independent weights $(W_x)_{x \in \Z^d}$. In this example, we choose the weights to have a particular distribution, namely, $W_x= U_x^{-1/\beta}$ for some $\beta>0$, where $(U_x)_{x \in \Z^d}$ is a collection of i.i.d.\ uniform random variables on the interval $[0,1]$. We take $G_n=G$ for all $n \in \N$, where $G$ is defined as follows. Given the weights, we add the edge $\{x,y\}$ for distinct $x, y \in \Z^d$ independently with probability
\begin{equation*}
p_{xy}=\mathds{1}\left\{ \min \left\{U_x^{-1/\beta} ,U_y^{-1/\beta}\right\} \ge |x-y| \right\}.
\end{equation*} \\

\noindent\textbf{III. Scale-free continuum percolation models.} The heterogeneous random connection model (RCM), which was introduced in \cite{DW18}, is a continuum space analogue of the random graph model {\bf I}. Let $X$ be a stationary Poisson process in $\R^d$ with unit intensity and i.i.d.\ marks $(W_x)_{x \in X}$. Given $X$ and $(W_x)_{x \in X}$, we take $V_n=X$ and $G_n=G$ for all $n \in \N$, where in $G$, we join two points $x\not=y \in X$ by an edge independently with probability $p_{xy}$ given by \eqref{eq:ConPr}.\\

\noindent\textbf{IV. Norros-Reittu model.} Unlike the models {\bf I}--{\bf III}, this model from \cite{NR06} does not depend on the geometry of the space and allows multiple edges and self-loops. Moreover the graphs $(G_n)_{n \in \N}$ are not identical but evolve with $n$. Let  $(W_x)_{x \in \N}$ be a sequence of i.i.d.\ weights. For $n \in \N$, the vertex set $V_n$ is taken to be $[n]=\{1,\hdots,n\}$ with assigned weights $(W_x)_{x \in [n]}$. Define $L_n=\sum_{k=1}^{n}W_k$. For each $n \in \N$, given the weights $(W_x)_{x \in [n]}$, we construct the random graph $G_n$ on the vertex set $[n]$ by joining $E_n\{x,y\}=E_n\{y,x\}$ many edges between the vertices $x,y \in [n]$, where $(E_n\{x,y\})_{1\leq x \leq y\leq n}$ are independent with
$$E_n\{x,y\} \sim \mathrm{Poisson}(W_xW_y/L_n).$$

\noindent\textbf{V. Chung-Lu model.} The Chung-Lu random graph model with deterministic weights was introduced in \cite{CL02a,CL02b}. Here, we consider a version with random weights, see \cite{EskerHofstadHooghiemstra}. Let $(W_x)_{x \in \N}$ be i.i.d.\ weights.
For $n \in \N$, we take the vertex set $V_n=[n]$ with associated weights $(W_x)_{x \in [n]}$. To construct the graph $G_n$, given the weights $(W_x)_{x \in [n]}$, we join an edge between the vertices $x,y \in [n]$ independently with probability
$$p_{xy}=\min\left\{\frac{W_xW_y}{L_n}, 1\right\}$$
where $L_n=\sum_{k=1}^{n}W_k$. This model is closely related to the model {\bf IV}; it replaces the Poisson random variable for the number of edges between two vertices in {\bf IV} by a related Bernoulli random variable, thus getting rid of the multiple edges while still allowing self-loops. We will see in Section~\ref{sec:app} that one can indeed construct a natural coupling of these two models.\\

We now turn to describing our general random graph model in more detail. For $n\in \N$, let $(D_{n,x})_{x \in V_n}$ denote the degree sequence of the graph $G_n$. We note here that for the degree of a vertex in a graph with multiple edges or self-loops, multiple edges are counted with their multiplicities, while each self-loop contributes only one. The main results in this paper require some assumptions on the underlying graphs $(G_n)_{n\in \N}$, their degrees and the weights.

Recall, a function $U:[0,\infty) \to [0,\infty)$ is said to be \textit{regularly varying} with \textit{index} $\rho \in \R$ (we write $U\in \mathrm{RV}_\rho$) if for any $a>0$,
$$\lim_{t \to \infty}\frac{U(at)}{U(t)}=a^\rho.$$
If $\rho=0$, we call $U$ \textit{slowly varying}. An equivalent and often more insightful way of representing a regularly varying function $U(\cdot)$ with index $\rho$ is to write
$$U(t)=t^\rho V(t), \quad t> 0,$$
where $V(\cdot)$ is slowly varying. We restrict ourselves to the following class of distributions for the weights.
\begin{enumerate}
	\item[(A1)] The common weight distribution $F(\cdot)$ has a regularly varying tail with \textit{exponent} $\beta>0$, i.e., $1-F \in \mathrm{RV}_{-\beta}$.
\end{enumerate}
This means that
\begin{equation}\label{eq:WeDis}
1-F(w)=w^{-\beta}L(w), \quad w>0,
\end{equation}
where $L(\cdot)$ is a slowly varying function.
As mentioned in the introduction, in real-world complex networks, it is often observed that they are scale-free. As we will see in Theorem~\ref{thm:deg}, the above choice of distribution for the weights is crucial to ensure that the degree distribution has a power-law tail.

Our remaining assumptions concern the behaviour of the degrees of $(G_n)_{n\in \N}$.
\begin{enumerate}
	\item[(A2)] For each $n \in \N$, there exists a random vector $(D_n,W)$ with $W \sim F$ as in \eqref{eq:WeDis} such that for any measurable set $A \subseteq \N_0 \times [0,\infty)$ and any $B\subseteq S$ with $\E|V_n\cap B|<\infty$,
	$$\E \sum_{x\in V_n\cap B} \mathds{1}\{(D_{n,x},W_x) \in A\}=\E| V_n\cap B| \Prob{(D_n,W) \in A}.$$
\end{enumerate}
We will call the random variables $D_n$ and $W$ the typical degree and weight, respectively. Note that when the vertex sets $(V_n)_{n \in \N}$ are non-random and for each $n\in\N$, $(D_{n,x},W_x)=_d (D_{n,y},W_y)$ for $x , y \in V_n$, the assumption (A2) is trivially satisfied with $(D_n,W)= (D_{n,x_n}, W_{x_n})$ for some fixed $x_n\in V_n$. This applies to the models {\bf I}, {\bf II}, {\bf IV} and {\bf V}. For the model {\bf III}, since the vertex set is a Poisson process, we cannot take a fixed vertex as typical vertex. Instead, we add the origin $0$ to the vertex set with independently assigned weight $W_0 \sim F$ and extend the graph in a natural way to include $0$ as a vertex. This new measure after adding $0$ to the Poisson process happens to be the Palm measure corresponding to the original probability measure; for a detailed exposition on Palm theory, see e.g.\ \cite[Chapter 12]{DV88}. Hence, we can take the typical degree-weight pair to be $(D_{n,0},W_0)$ where $D_{n,0}$ is the degree of $0$ in the extended graph.

We make the following assumptions about the typical degree $D_n$ and the typical weight $W$. By $\E_W$ we denote the conditional expectation with respect to $W$ throughout this paper.
\begin{enumerate}
	\item[(A3)] There exist positive universal constants $\xi,p,C$ with a sequence of positive real numbers $\xi_k \to \xi$ as $k \to \infty$ and $\varrho \in (0,1]$ such that 
	$$
	|\E_W D_n - \xi_n W^p| \le C\max\left\{1,W^{p(1-\varrho)}\right\} \quad \mathbf{P}\text{-a.s.}
	$$
	for all $n\in\N$.
	\item[(A4)] For each $m \in \N$, there exist non-negative constants $a_m$ and $C_m$ depending only on $m$ such that
	$$\E_W |D_n-\E_W D_n|^{m}\le a_m W^{\frac{mp}{2}} + C_m  \quad \mathbf{P}\text{-a.s.}$$
	for all $n\in\N$. Here, $p$ is as in (A3).
\end{enumerate}

In Section \ref{sec:app}, we will show that the random graph models {\bf I}--{\bf IV} discussed above satisfy the assumptions (A2)-(A4) if the weights are chosen according to (A1) and some mild assumptions on the model parameters are satisfied. For model {\bf V}, it turns out that (A3) does not hold. We instead use a coupling with model {\bf IV} to prove our results for this model.

In this paper, we are interested in the large degrees of the random graphs $(G_n)_{n\in \N}$. Since the graph $G_n$ can have infinitely many vertices, we will only consider the degrees of finitely many vertices within an observation window and let the window grow with $n$. For this, throughout the sequel, we let $(S_n)_{n\in \N}$ be a sequence of deterministic sets such that for $\Delta_n=V_n\cap S_n$,
\begin{equation}\label{eqn:Delta}
\lim_{n\to\infty} \frac{\E |\Delta_n|}{n}=1 \quad \text{and} \quad \frac{|\Delta_n|}{n} \xrightarrow{\mathbf{P}} 1 \; \text{ as }\; n \to \infty,
\end{equation}
where $|\Delta_n|$ denotes the cardinality of $\Delta_n$. Recall $W \sim F$ from (A2) and the constant $p>0$ given by assumption (A3). We define the \textit{quantile function} $q(\cdot)$ for $W^p$ as
\begin{equation}\label{eq:qf}
q(t)=\inf\left \{x \ge 0:\Prob{W^p \le x} \ge 1-1/t\right \}, \quad t \ge 1.
\end{equation}
Throughout, $\mathcal{D}_n$ stands for the point process
\begin{equation*}
\mathcal{D}_n=\sum_{x \in \Delta_n} \delta_{\frac{D_{n,x}}{\xi_n q(n)}},
\end{equation*}
i.e., $\mathcal{D}_n$ is the collection of the degrees of vertices of $G_n$ that belong to $S_n$, up to a rescaling by $\xi_n q(n)$ with $\xi_n$ from assumption (A3).
Recall, we count multiple edges with multiplicities when considering the degree of an associated vertex, while each self-loop contributes only one to the degree of its vertex.

Throughout the whole paper, we fix $\gamma:=\beta/p$ where $\beta$ and $p$ are the constants from the assumptions (A1) and (A3), respectively. Let $\nu_\gamma$ denote the measure on $(0,\infty]$ given by 
$$
\nu_\gamma((a,b])=a^{-\gamma}-b^{-\gamma}
$$
for all $0<a<b \le \infty$. Also recall that $Z \sim {\rm Frechet}(\gamma)$ if $\Prob{Z \le z}=e^{-z^{-\gamma}}$ for all $z \ge 0$. The following result proves that the scaled degree sequence $\mathcal{D}_n$ asymptotically behaves like a Poisson process as $n \to \infty$.

\begin{theorem}\label{thm.PoiLim} Assume that (A1)-(A4) are satisfied and let $\eta_{\gamma}$ be a Poisson process with intensity measure $\nu_{\gamma}$. Then, as $n \to \infty$,
	\begin{equation}\label{ProConv}
	\mathcal{D}_n \xrightarrow{d} \eta_{\gamma} \quad \text{in } M_p((0,\infty]).
	\end{equation}
	In particular, as $n \to \infty$,
	\begin{equation}\label{eq:Frl}
	\xi_n^{-1}q(n)^{-1} \max_{x \in \Delta_n}D_{n,x} \xrightarrow{d} {\rm Frechet}(\gamma).
	\end{equation}
\end{theorem}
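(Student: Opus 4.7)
The plan is to couple the degree point process $\mathcal{D}_n$ to the weight point process $\mathcal{W}_n := \sum_{x \in \Delta_n} \delta_{W_x^p/q(n)}$ and exploit that assumption (A3) forces $D_{n,x} \approx \xi_n W_x^p$ with a fluctuation controlled by (A4). Under (A1), $W^p$ has a regularly varying tail of index $-\gamma = -\beta/p$, so classical extreme-value theory for i.i.d.\ samples together with $|\Delta_n|/n \xrightarrow{\mathbf{P}} 1$ from \eqref{eqn:Delta} yields $\mathcal{W}_n \xrightarrow{d} \eta_\gamma$ in $M_p((0,\infty])$; this is the input from which the theorem will be derived.

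To transfer this convergence to $\mathcal{D}_n$, I would fix levels $0 < a < b \le \infty$ with $\nu_\gamma(\{a,b\}) = 0$ and show that with probability tending to one, the points of $\mathcal{D}_n$ in $(a,b]$ correspond bijectively to points of $\mathcal{W}_n$ in a slightly wider interval. For a vertex $x \in \Delta_n$ with moderate weight $W_x^p \le K q(n)$, assumption (A3) gives
$$
\left| \frac{\E_W D_{n,x}}{\xi_n q(n)} - \frac{W_x^p}{q(n)} \right| \le \frac{C \max\{1, (K q(n))^{1-\varrho}\}}{\xi_n q(n)} \longrightarrow 0,
$$
since $q(n)\to\infty$, $\xi_n \to \xi > 0$ and $\varrho > 0$, while Chebyshev's inequality combined with (A4) for $m=2$ yields
$$
\Prob{|D_{n,x} - \E_W D_{n,x}| > \eps \xi_n q(n) \mid W_x} \le \frac{a_2 W_x^p + C_2}{\eps^2 \xi_n^2 q(n)^2}.
$$
Since the tightness of $\mathcal{W}_n((a/2, K])$ limits the number of such large-weight vertices to $O_{\mathbf{P}}(1)$, a union bound shows that with probability $1 - o(1)$ all of them simultaneously satisfy $|D_{n,x}/(\xi_n q(n)) - W_x^p/q(n)| < \eps$. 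Very-large-weight vertices with $W_x^p > K q(n)$ are handled by first taking $n \to \infty$ and then $K \to \infty$, using $\Prob{\eta_\gamma((K,\infty]) > 0} = 1 - e^{-K^{-\gamma}} \to 0$; small-weight vertices cannot produce degrees near $a \xi_n q(n)$ because $\E_W D_{n,x}$ is then of smaller order than $\xi_n q(n)$ by (A3), and (A4) with a sufficiently large $m$ makes the resulting deviation probability $o(1/n)$, so a union bound over $\Delta_n$ still works.

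Combining these estimates gives \eqref{ProConv}. Statement \eqref{eq:Frl} then follows by the continuous mapping theorem applied to the functional $\mu \mapsto \sup\{z > 0 : \mu((z,\infty]) > 0\}$, which is continuous at $\eta_\gamma$ almost surely, together with $\Prob{\max \eta_\gamma \le z} = \Prob{\eta_\gamma((z,\infty]) = 0} = e^{-z^{-\gamma}}$. The main technical obstacle is the uniform control of the fluctuation step: a brute union bound over all $\Theta(n)$ vertices with the per-vertex Chebyshev bound of order $W_x^p/q(n)^2$ is not summable across the bulk, so one must first localize via Step~1 to the tight collection of large-weight vertices before invoking (A4); this is precisely where the separation between the conditional-mean control (A3) and the conditional-moment control (A4) becomes essential, and also the reason the stronger $m$th-moment bound in (A4) (rather than merely the variance) is needed to sweep away the many small-weight vertices.
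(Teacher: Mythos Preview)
Your outline is correct and follows the same high-level strategy as the paper: reduce $\mathcal{D}_n\xrightarrow{d}\eta_\gamma$ to the weight-process convergence $\mathcal{W}_n\xrightarrow{d}\eta_\gamma$ (the paper calls this process $\mathcal{E}_n$ and proves the convergence as Lemma~3.5) by controlling the gap between $D_{n,x}$ and $\xi_n W_x^p$ through (A3) for the conditional mean and (A4) for the fluctuations, and then read off the Fr\'echet limit from $\Prob{\eta_\gamma((a,\infty])=0}=e^{-a^{-\gamma}}$.

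The execution differs in organisation. The paper does not use your three-way split into small\,/\,moderate\,/\,very-large weights with a tightness argument on the middle band. Instead it proves a single $L^1$ estimate (Theorem~3.3): via (A2) everything is rewritten in terms of the typical pair $(D_n,W)$, and one shows
\[
\E\sum_{x\in\Delta_n}\mathds{1}\{D_{n,x}\ge \xi_n a\,q(n)\ge \xi_n W_x^p\}\to 0
\]
(and the symmetric statement) by splitting only at the single level $a$. The contribution from $W^p$ far from $a\,q(n)$ is handled by a lemma (Lemma~3.4) that combines $m$th-moment Markov bounds with Karamata's theorem; the large-weight side uses a power-$\tau\in(0,\min\{1,\gamma\})$ trick on the conditional probability rather than your cutoff~$K$. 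One payoff of the paper's packaging is that Theorem~3.3 is stated simultaneously for $t_n=1$ and for intermediate sequences~$t_n=k_n$, so the same estimate feeds directly into the Hill-estimator proof; your argument would need a separate pass for that.

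One point to tighten: your claim that the per-vertex deviation probability for small-weight vertices is ``$o(1/n)$'' is not correct as stated, since the bound $O(W_x^{mp/2}/q(n)^m)$ is not uniform in $W_x$ over the whole small-weight range. What actually works (and what the paper does) is to show that the \emph{expected sum} over $\Delta_n$ vanishes; this uses (A2) to reduce to $n\,\E[\mathds{1}\{W^p\le a q(n)\}W^{mp/2}]/q(n)^m$ and then Karamata's theorem on the truncated moment.
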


Since $\eta_\gamma$ as a point process on $[0,\infty]$ is not locally finite at zero, we exclude the origin in \eqref{ProConv} by considering the convergence only in $M_p((0,\infty])$. Note however, that we do include infinity in our underlying space. We consider the usual topology on $(0,\infty]$, i.e., the topology generated by intervals of the form $(a,b)$, $0 \le a<b<\infty$, and $(a,\infty]$ for $a \in [0,\infty)$. This naturally makes neighbourhoods of $\infty$ relatively compact, which is convenient for us, as a convergence like \eqref{eq:Frl} then follows as an immediate consequence of \eqref{ProConv}. For a detailed discussion on the space $M_p((0,\infty])$, its properties and vague convergence in this space, see \cite[Chapter 6]{Re07} or \cite[Chapter 3]{Re87}. We note that one can derive a similar result as \eqref{eq:Frl} for the $m$-th largest degree from \eqref{ProConv}.

Denote the order statistics for the degree sequence $(D_{n,x})_{x \in \Delta_n}$ by 
$$D_{(1)}(n) \ge D_{(2)}(n)\ge \dots \ge D_{(|\Delta_n|)}(n).$$
For random variables with regularly varying distribution tail, one is often interested in estimating the index of regular variation. In the i.i.d.\ case, one way to estimate this quantity is by the Hill estimator, as indicated in the introduction. Define the Hill estimator based on the $k \in \N$ upper order statistics of $(D_{n,x})_{x \in \Delta_n}$ as 
$$
H_{ k,n} = \frac{1}{k} \sum _ { i = 1 } ^ {k} \log \frac { D_{(i)}(n) } { D_{\left(k+1\right)}(n)}.
$$
In the i.i.d.\ case, the Hill estimator approximates the inverse of the tail exponent. It turns out that the same holds under the special dependency structure we have in the degree sequence. The following theorem proves consistency of the Hill estimator. Recall that a positive integer sequence $(k_n)_{n \in \N}$ is an intermediate sequence if $k_n < n$ for all $n \in\N$, $k_n \to \infty$ and $k_n/n \to 0$ as $n \to \infty$.

\begin{theorem}\label{thm:hillCons} Assume that (A1)-(A4) are satisfied and let $(k_n)_{n \in \N}$ be an intermediate sequence. Then
	$$H_{k_{n},n}\xrightarrow{\mathbf{P}} \frac{1}{\gamma} \quad \text{as }\; n \to \infty.$$
\end{theorem}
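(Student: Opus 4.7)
The plan is to use assumption (A3) to approximate the typical degree by $D_{n,x}\approx\xi_n W_x^p$ and thereby reduce the Hill estimator for the degree sequence to the one for the i.i.d.\ sample $(W_x^p)_{x\in\Delta_n}$. Since the tail of $W^p$ is regularly varying with index $-\gamma=-\beta/p$, the classical Hill consistency for i.i.d.\ data (see e.g.\ \cite[Theorem 4.2]{Re07}) then delivers the limit $1/\gamma$. A convenient feature is that the Hill estimator is invariant under multiplication of all observations by a positive constant, so the factor $\xi_n$ in the coupling drops out of the logarithmic ratios.

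\textbf{Coupling on the high-weight tail.} The central step is to show that for every $\delta>0$ there is a constant $c>0$ such that
$$\mathbf{P}\left(\max_{x\in\Delta_n:\,W_x^p\ge c\,q(n/k_n)}\left|\frac{D_{n,x}}{\xi_n W_x^p}-1\right|>\delta\right)\longrightarrow 0 \quad\text{as } n\to\infty.$$
For this, assumption (A3) absorbs the conditional bias $|\E_W D_{n,x}-\xi_n W_x^p|=O(W_x^{p(1-\varrho)})$, which is negligible relative to $\xi_n W_x^p$ when $W_x^p\to\infty$, while Markov's inequality with a sufficiently high moment $m$ combined with (A4) gives a per-vertex bound of order $W_x^{-mp/2}$ for the centred fluctuation. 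Integrating against the weight distribution through (A2) and choosing $m$ large (relative to $\gamma$ and $\varrho$), the expected number of offending vertices is made to vanish. Standard extreme-value facts for i.i.d.\ weights then give $W_{(k_n)}^p/q(n/k_n)\xrightarrow{\mathbf{P}}1$, so the top $k_n+1$ weight order statistics lie in the coupling regime, and with high probability they coincide with the top $k_n+1$ degree order statistics, satisfying $D_{(i)}(n)=\xi_n W_{(i)}^p\,(1+o_{\mathbf{P}}(1))$ uniformly in $1\le i\le k_n+1$.

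\textbf{Reducing to the i.i.d.\ Hill estimator.} Substituting the above coupling into the definition of $H_{k_n,n}$ and using the scale invariance yields
$$H_{k_n,n}=\frac{1}{k_n}\sum_{i=1}^{k_n}\log\frac{(W^p)_{(i)}}{(W^p)_{(k_n+1)}}+o_{\mathbf{P}}(1),$$
which is the classical Hill estimator for the i.i.d.\ sample $(W_x^p)_{x\in\Delta_n}$. Since $|\Delta_n|/n\xrightarrow{\mathbf{P}}1$ and $(k_n)$ is intermediate, a conditioning argument on $|\Delta_n|$ reduces to the standard i.i.d.\ consistency theorem, giving $H_{k_n,n}\xrightarrow{\mathbf{P}}1/\gamma$. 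The principal technical challenge is the uniform coupling: we must approximate $D_{n,x}$ simultaneously for a window of roughly $k_n$ vertices using only the per-vertex conditional controls in (A3)-(A4), and extracting a bound valid for an arbitrary intermediate sequence requires careful balancing of the moment order $m$ against the size of the tail window, exploiting the regular variation of $q(\cdot)$ and the bias-versus-fluctuation trade-off encoded in the parameter $\varrho$.
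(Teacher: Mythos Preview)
Your approach is different from the paper's and conceptually appealing, but the uniform coupling step has a genuine gap. You claim that
\[
\mathbf{P}\Bigl(\max_{x\in\Delta_n:\,W_x^p\ge c\,q(n/k_n)}\Bigl|\tfrac{D_{n,x}}{\xi_n W_x^p}-1\Bigr|>\delta\Bigr)\longrightarrow 0,
\]
and justify it by bounding the expected number of offending vertices via (A2) and Markov's inequality with (A4). The per-vertex bound from (A4) is of order $(W_x^p)^{-m/2}$, and integrating against the weight law over $\{W^p\ge c\,q(n/k_n)\}$ gives an expected count of order $k_n\,q(n/k_n)^{-m/2}$. Since $q\in\mathrm{RV}_{1/\gamma}$, this is roughly $k_n(k_n/n)^{m/(2\gamma)}$, which need \emph{not} tend to zero for an arbitrary intermediate sequence: for $k_n=n/\log n$ it equals $n/(\log n)^{1+m/(2\gamma)}\to\infty$ for every fixed $m$. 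So the maximum over the $\sim k_n$ high-weight vertices cannot be controlled from polynomial moment bounds alone, and the claimed uniform $o_{\mathbf{P}}(1)$ ratio error does not follow. A second, unstated issue is the ``no intruders from below'' direction: even granting the coupling on $\{W_x^p\ge c\,q(n/k_n)\}$, you must still rule out that some vertex with $W_x^p<c\,q(n/k_n)$ has degree exceeding $\xi_n W_{(k_n+1)}^p$, and your sketch does not address this.

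The paper sidesteps both difficulties by never asking for $o_{\mathbf{P}}(1)$-many exceptions. It writes
\[
H_{k_n,n}=\int_{D_{(k_n+1)}(n)/(\xi_n q(n/k_n))}^{\infty}\mathcal{D}_{k_n,n}((y,\infty])\,\frac{dy}{y},
\]
proves $\mathcal{D}_{k_n,n}\xrightarrow{d}\nu_\gamma$ in $M_+((0,\infty])$ (Proposition~\ref{prop.LimMeasure}), pins the lower limit at $1$ via Lemma~\ref{lem:hill}, and controls the tail $\int_M^\infty$ through Karamata's theorem and Theorem~\ref{thm:exp}. The decisive point is that $\mathcal{D}_{k_n,n}$ is already divided by $k_n$, so the comparison with the weight process only requires the number of mismatches (in either direction of Theorem~\ref{thm:exp}) to be $o(k_n)$ in expectation, not $o(1)$. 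Your reduction to the i.i.d.\ Hill estimator could likely be rescued by working with such averaged quantities, but not through the uniform ratio control as written.
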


Our main results Theorems~\ref{thm.PoiLim} and \ref{thm:hillCons} require the general conditions given by (A1)-(A4) and apply to a broad class of random graphs. In Section~\ref{sec:app}, we apply these results to the models {\bf I}--{\bf V} described above and prove the following theorem showing that the conclusions of Theorems~\ref{thm.PoiLim} and \ref{thm:hillCons} hold for these random graphs under condition (A1) and appropriate assumptions on their parameters.

\begin{theorem}\label{thm:I-V}
	Assume (A1) is satisfied. Then the conclusions of Theorems~\ref{thm.PoiLim} and \ref{thm:hillCons} hold
	\begin{enumerate}[(a)]
		\item for models {\bf I} and {\bf III} with 
		$$\xi_n=\lambda^{d/\alpha}v_{d}\Gamma\left(1-\frac{d}{\alpha}\right)\E\left[W^{d/\alpha}\right],$$
		where $v_d$ denotes the volume of the unit ball in $\R^d$ and $\Gamma(\cdot)$ is the Gamma function, $S_n=[0,n^{1/d}]^d$, $n \in \N$, and $\gamma=\alpha\beta/d$ if $d <\min\{\alpha,\alpha \beta\}$,
		\item for model {\bf II} with $\xi_n=dv_d/(d-\beta)$, $S_n=[0,n^{1/d}]^d$, $n \in \N$, and $\gamma=\beta/(d-\beta)$ if $\beta <d$,
		\item for model {\bf IV} with $\xi_n=1$, $S_n=[n]$, $n \in \N$, and $\gamma=\beta$,
		\item for model {\bf V} with $\xi_n=1$, $S_n=[n]$, $n \in \N$, and $\gamma=\beta$ if $\beta>2$. 
	\end{enumerate}
\end{theorem}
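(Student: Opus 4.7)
The plan is to verify assumptions (A2)--(A4) for models \textbf{I}--\textbf{IV} and invoke Theorems~\ref{thm.PoiLim} and \ref{thm:hillCons}; model \textbf{V} is then handled via a coupling with model \textbf{IV}. Assumption (A2) is essentially immediate: for models \textbf{I}, \textbf{II}, \textbf{IV} and \textbf{V} the vertex sets are non-random and the vertices are exchangeable (by translation invariance of $\Z^d$ or exchangeability on $[n]$), so any fixed vertex may serve as the typical one; for model \textbf{III}, the Mecke/Campbell formula identifies the required expectation with its Palm version, obtained by adding a point at the origin with an independent weight $W_0\sim F$ to the Poisson process, so the typical pair is $(D_{n,0},W_0)$.

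The bulk of the work lies in (A3), identifying the leading behaviour of $\E_W D_n$ in the typical weight $w$. In models \textbf{I} and \textbf{III}, conditioning on this weight, the degree is a sum of independent indicators, giving
\[
\E_W D_n = \sum_{x\in\Z^d\setminus\{0\}}\E\bigl[1-\exp(-\lambda w W/|x|^{\alpha})\bigr]
\]
and the analogous Lebesgue integral over $\R^d$ for model \textbf{III}. The substitution $y=x/(\lambda w)^{1/\alpha}$ combined with Fubini produces the leading term $\xi w^{d/\alpha}$ with $\xi = \lambda^{d/\alpha}v_d\,\Gamma(1-d/\alpha)\,\E[W^{d/\alpha}]$; the condition $d<\alpha$ ensures convergence of $\int_0^\infty r^{d-1}(1-e^{-1/r^\alpha})\,dr$, while $d<\alpha\beta$ gives $\E[W^{d/\alpha}]<\infty$ under (A1). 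For model \textbf{II}, the conditional mean reduces to $\sum_{1\le|x|\le w}|x|^{-\beta}=\tfrac{dv_d}{d-\beta}w^{d-\beta}+O(1)$ when $\beta<d$. For model \textbf{IV}, using $\sum_{y\ne x}W_y = L_n - W_x$ one finds $\E_W D_{n,x}=W_x$ exactly, so (A3) holds trivially with $\xi_n\equiv 1$ and $p=1$. In each case $\xi_n$ is constant in $n$ and $\gamma=\beta/p$ matches the exponent stated in the theorem.

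For (A4), the decisive structural fact is that, in each of models \textbf{I}--\textbf{IV}, given the full weight configuration $D_{n,x}$ is a sum of independent Bernoulli or Poisson contributions whose conditional variance is comparable to the conditional mean and hence bounded by $cW^p+c$. Rosenthal's inequality then yields the a.s.\ bound $a_mW^{mp/2}+C_m$ after integrating out the auxiliary weights. For model \textbf{V} one constructs a natural coupling with model \textbf{IV} by first setting $B^{IV}_{xy}=\mathds{1}\{E_n\{x,y\}\ge 1\}$ and, since $1-e^{-W_xW_y/L_n}\le\min\{W_xW_y/L_n,1\}$, superimposing an independent Bernoulli thinning to match the Chung--Lu edge probabilities. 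The total difference between the two degree sequences (from the multiplicities lost in the first step and the Bernoullis added in the second) is a sum of small-mean contributions whose impact on $D_{n,x}/q(n)$ vanishes in probability once $\beta>2$ forces $\E W^2<\infty$; Theorems~\ref{thm.PoiLim} and \ref{thm:hillCons} then transfer from model \textbf{IV} to model \textbf{V}.

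The hardest step will be the error control in (A3) for models \textbf{I} and \textbf{III}: comparing the lattice sum $\sum_x\E[1-\exp(-\lambda wW/|x|^\alpha)]$ to its leading power $\xi w^{d/\alpha}$ with a remainder of the precise form $C\max\{1,w^{p(1-\varrho)}\}$, uniformly in $w$. This is managed by splitting at $|x|\asymp w^{1/\alpha}$: the small-$|x|$ block is bounded using Karamata-type estimates for $\E[W^{d/\alpha}\mathds{1}\{W>c\}]$, the large-$|x|$ block is controlled by Taylor expansion of $1-e^{-t}$ together with (A1), and the discrepancy between Riemann sum and continuum integral is handled by summation-by-parts on annular shells of radius $\asymp w^{1/\alpha}$.
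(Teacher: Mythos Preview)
Your overall strategy matches the paper's: verify (A2)--(A4) for models \textbf{I}--\textbf{IV} and then handle model \textbf{V} by coupling with \textbf{IV}. The differences are in the tools. For (A3) the paper simply cites \cite{DHH13,Y06,DW18,NR06} (its Lemma~\ref{lem:A3}), whereas you sketch the underlying computations; these are the same arguments, so nothing is gained or lost except self-containment. For (A4) the paper uses a direct partition expansion of $\E_W(D_n-\E_W D_n)^{2k}$ (Lemma~\ref{lem:momg}) rather than Rosenthal's inequality; your route works too, but note that for models \textbf{I} and \textbf{II} the edge indicators are already conditionally independent given $W_0$ \emph{alone} (since the auxiliary weights $W_x$ are independent across $x$), so you should apply Rosenthal at that level of conditioning directly rather than first conditioning on the full configuration and then ``integrating out''---the latter forces you to control the fluctuations of $\E_{\mathbf W}D_n$ around $\E_W D_n$ separately, which is unnecessary. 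For model \textbf{V} the paper's coupling is slightly different from yours: it introduces a third graph $G_n^3$ with $\mathrm{Poisson}(\min\{W_xW_y/L_n,1\})$ edges, couples $G_n^1$ and $G_n^3$ by the event $\{\max_k W_k^2\le L_n\}$ on which they coincide, and couples $G_n^2$ and $G_n^3$ via the maximal Bernoulli--Poisson coupling. Your two-step coupling (collapse multiplicities, then sprinkle) is a legitimate alternative and leads to the same $O(p_{xy}^2)$ edge-discrepancy estimates.

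Two small points to fix. First, for model \textbf{II} the error in $\sum_{0<|x|\le w}|x|^{-\beta}$ versus $\tfrac{dv_d}{d-\beta}w^{d-\beta}$ is not $O(1)$ in general: the boundary-shell contribution is of order $w^{d-1-\beta}$, which is only bounded when $\beta\ge d-1$; this is why the paper records $\varrho=\min\{1,(d-\beta)^{-1}\}$ rather than $\varrho=1$. Second, ``Theorems~\ref{thm.PoiLim} and \ref{thm:hillCons} then transfer'' for model \textbf{V} hides real work: the Poisson-process limit transfers once you show the total edge discrepancy is $o(q(n/t_n))$, but for the Hill estimator you additionally need $D_{(k_n+1)}(n)\sim q(n/k_n)$ for the Chung--Lu degrees (the paper invokes its Lemma~\ref{lem:hill} directly for $G_n^2$) in order to control $\sum_u\bigl|\log D_{(u)}^{\mathbf V}-\log D_{(u)}^{\mathbf{IV}}\bigr|$. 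Make that step explicit.
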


\section{Proofs of Theorems~\ref{thm.PoiLim} and \ref{thm:hillCons}}\label{sec:proofs}

\subsection{Distribution of the typical degree}
We start by proving the following result, which shows that the typical degree $D_n$ of the graph $G_n$ satisfying (A1)-(A4) has a regularly varying distribution tail. For the models {\bf I}-{\bf IV} this is well-known (under some assumptions on the model parameters, see \cite{DHH13,Y06,DW18,NR06}), but our finding shows that this is a consequence of the abstract assumptions (A1)-(A4). Recall that $\gamma=\beta/p$ with $\beta$ and $p$ as in the assumptions (A1) and (A3), respectively.

\begin{theorem}\label{thm:deg}
	Assume that (A1)-(A4) are satisfied. Then for each $n\in\N$ the typical degree distribution has a regularly varying tail with exponent $\gamma$, i.e., for $t>0$,
	$$
	\Prob{D_n>t}=t^{-\gamma} \ell_n(t),
	$$
	where $\ell_n(\cdot)$ is slowly varying.
\end{theorem}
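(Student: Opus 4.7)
The plan is to prove the tail equivalence
$$\Prob{D_n > t}\sim\Prob{\xi_n W^p > t}\qquad\text{as }t\to\infty$$
for each fixed $n\in\N$. Since by (A1)
$$\Prob{\xi_n W^p > t}=\Prob{W>(t/\xi_n)^{1/p}}=\xi_n^{\gamma}\,t^{-\gamma}L\bigl((t/\xi_n)^{1/p}\bigr)$$
is regularly varying in $t$ with index $-\gamma$, the equivalence will transfer the same regular variation to $\Prob{D_n>t}$, and the slowly varying function $\ell_n(t):=t^{\gamma}\Prob{D_n>t}$ of the statement can be read off.

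For the sandwich, fix $\eps\in(0,1)$ and write
$$\Prob{D_n>t}\le\Prob{\xi_n W^p>(1-\eps)t}+R^+_t,\qquad\Prob{D_n>t}\ge\Prob{\xi_n W^p>(1+\eps)t}-R^-_t,$$
with $R^+_t=\Prob{D_n>t,\,\xi_n W^p\le(1-\eps)t}$ and $R^-_t=\Prob{D_n\le t,\,\xi_n W^p>(1+\eps)t}$. On both events one has $|D_n-\xi_n W^p|\ge\eps t$; splitting $D_n-\xi_n W^p=(D_n-\E_W D_n)+(\E_W D_n-\xi_n W^p)$ and using (A3) to bound the deterministic piece by $C\max\{1,W^{p(1-\varrho)}\}$, which is $o(W^p)+o(t)$ on the respective events, these events reduce (for $t$ large enough) to $|D_n-\E_W D_n|\ge\eps t/2$.

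Assumption (A4) combined with the conditional Markov inequality then gives, for any even $m$,
$$\Prob{|D_n-\E_W D_n|>\eps t/2\mid W}\le\frac{a_m W^{mp/2}+C_m}{(\eps t/2)^m}.$$
On the upper-bound event the weight is automatically bounded, $W\le c t^{1/p}$, and Karamata's theorem on the truncated moment $\E[W^{mp/2};\,W\le c t^{1/p}]$ (which is $O(1)$ when $mp/2<\beta$ and $O(t^{m/2-\gamma}L(t^{1/p}))$ when $mp/2>\beta$) yields $R^+_t=O\bigl(t^{-m}+t^{-m/2-\gamma}L(t^{1/p})\bigr)$. On the lower-bound event $W$ can be arbitrarily large, so I would further split at $W=Kt^{1/p}$ for a large constant $K$: the intermediate range $c t^{1/p}<W\le Kt^{1/p}$ is handled by the same Karamata estimate, while the tail is bounded crudely by $\Prob{W>Kt^{1/p}}\sim K^{-\beta}\xi_n^{-\gamma}\Prob{\xi_n W^p>t}$. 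Choosing $m>2\gamma$, dividing the sandwich by $\Prob{\xi_n W^p>t}$, using $\Prob{\xi_n W^p>(1\pm\eps)t}/\Prob{\xi_n W^p>t}\to(1\pm\eps)^{-\gamma}$, and sending first $K\to\infty$ and then $\eps\downarrow 0$ delivers the desired equivalence.

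The main obstacle is the lower-bound error $R^-_t$: the conditional moment bound in (A4) blows up like $W^{mp/2}$, while $\E W^{mp/2}=\infty$ whenever $mp/2\ge\beta$, so one cannot integrate naively against $W$. The two-stage truncation at $Kt^{1/p}$ with $K\to\infty$ is essential, and it balances the stochastic concentration scale $W^{p/2}$ coming from (A4) against the tail decay $K^{-\beta}t^{-\gamma}L(t^{1/p})$ coming from (A1); this is precisely what forces the moment-order threshold $m>2\gamma$.
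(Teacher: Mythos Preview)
Your proposal is correct and shares the paper's overall strategy of proving the tail equivalence $\Prob{D_n>t}\sim\Prob{\xi_n W^p>t}$ via a sandwich argument, but the treatment of the error terms is genuinely different. You work with the fixed deviation threshold $\eps t$, which forces you---for $R^-_t$, where $W$ is unbounded---to introduce a second truncation at $W=Kt^{1/p}$ and take an extra limit $K\to\infty$ in order to dispose of the contribution from very large weights (since $\E W^{mp/2}=\infty$ for the moment order $m>2\gamma$ you need). The paper instead absorbs this difficulty into a single $W$-adaptive threshold $u(W,t)=\max\{W^{xp},\eps t\}$ with $x\in(\max\{1/2,1-\varrho\},1)$: because $u\ge W^{xp}$, one has $W^{mp/2}/u^m\le u^{-m(1-1/(2x))}$ (and similarly for the $W^{mp(1-\varrho)}$ term coming from (A3)), so the Markov bound on $\mathbf{P}_W\{|D_n-\xi_n W^p|>u\}$ is already a deterministic $O((\eps t)^{-2\gamma})$ uniformly in $W$, and no further truncation or $K$-limit is needed. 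The trade-off is that your route is more elementary and keeps the sandwich boundaries $\Prob{\xi_n W^p>(1\pm\eps)t}$ completely explicit, whereas the paper's random threshold is slicker but requires an additional step showing $\Prob{\xi_n W^p>t\pm u}$ is close to $\Prob{\xi_n W^p>(1\pm 2\eps)t}$, via $u\le W^{xp}+\eps t$ and $W^{xp}=o(W^p)$.
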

\begin{proof}
	Since by (A1)-(A2) the distribution of $\xi_n W^p$ has a regularly varying tail with exponent $\gamma$, it is enough to show that
	\begin{equation}\label{eq:DWeq}
	\lim_{t \to \infty}\frac{\Prob{D_n>t}}{\Prob{\xi_n W^p>t}}=1.
	\end{equation}
	Let $\mathbf{P}_W$ denote the conditional probability with respect to $W$. Recall $\varrho$ from (A3). Fix $\eps>0$ and $1>x>\max\{1/2,1-\varrho\}$ and let $t>0$. Define $u=u(W,t)=\max\{W^{xp},\varepsilon t\}$ and let $E_{u,W}$ denote the conditional probability $\mathbf{P}_W\{|D_n-\xi_n W^p|>u\}$. Then, we have
	\begin{equation*}
	\mathds{1}\{\xi_n W^p>t+u\} - E_{u,W} \le \mathbf{P}_W\{D_n>t\} \le \mathds{1}\{\xi_n W^p>t-u\} + E_{u,W} \quad \mathbf{P}\text{-a.s.},
	\end{equation*}
	and hence taking expectation,
	\begin{equation}\label{eq:DegIneq}
	\Prob{\xi_n W^p>t+u} - \E E_{u,W}  \le \Prob{D_n>t} \le \Prob{\xi_n W^p>t-u} + \E E_{u,W}.
	\end{equation}
	Using the Markov inequality in the first step and (A3)-(A4) in the second, for any $m \in \N$, we obtain that $\mathbf{P}\text{-a.s.}$,
	\begin{align*}
	E_{u,W} & \le u^{-m} \E_W|D_{n}-\xi_n W^p|^m \\
	&\le u^{-m}2^{m-1} \left(a_m W^{mp/2}+C_m+C^m\max\left\{1,W^{mp(1-\varrho)}\right\}\right) \\
	&\le 2^{m-1}\left[a_m u^{-m(1-1/(2x))}+C^m u^{-m(1-(1-\varrho)/x)}\right] +(C_m+C^m)2^{m-1}u^{-m},
	\end{align*} 
	where in the last step we have used that $u=\max\{W^{xp},\varepsilon t\}\ge W^{xp}$.
	Choosing $m$ large enough so that $m(1-x^{-1}\max\{1/2,1-\varrho\}) \ge 2\gamma$ and using that $u \ge \eps t$, this yields that for $t \ge 1/\varepsilon$,
	\begin{align*}
	\E E_{u,W}\le C'_{m}(\varepsilon t)^{-2\gamma}
	\end{align*}
	for some constant $C'_m>0$. Recalling that $\xi_n W^p$ has a regularly varying tail with exponent $\gamma$, it follows that
	\begin{equation}\label{eq:er}
	\lim_{t \to \infty}\frac{\E E_{u,W}}{\Prob{\xi_n W^p>t}}=0.
	\end{equation}
	On the other hand, since $W \sim F$ by (A2),  using $u(W,t) \le W^{xp}+\varepsilon t$ in the first step, for $t$ large enough we have
	\begin{equation*}
	\Prob{\xi_n W^p - u(W,t)>t} \ge \Prob{\xi_n W^p-W^{xp}>(1+\varepsilon)t}
	\ge \Prob{\xi_n W^p>(1+2\varepsilon)t}.
	\end{equation*}
	Again, by regular variation of the tail distribution of $\xi_n W^p$, we obtain
	\begin{equation}\label{eq:lb1}
	\liminf_{t \to \infty}\frac{\Prob{\xi_n W^p - u(W,t)>t}}{\Prob{\xi_n W^p>t}}
	\ge\liminf_{t \to \infty}\frac{\Prob{\xi_n W^p>(1+2\varepsilon)t}}{\Prob{\xi_n W^p>t}}=\left(1+2\varepsilon\right)^{-\gamma}.
	\end{equation}
	An identical argument shows that for $t$ large enough,
	\begin{equation*}
	\Prob{\xi_n W^p + u(W,t)>t} \le \Prob{\xi_n W^p+W^{xp}>(1-\varepsilon)t}
	\le \Prob{\xi_n W^p>(1-2\varepsilon)t},
	\end{equation*}
	which yields
	\begin{equation}\label{eq:ub1}
	\limsup_{t \to \infty}\frac{\Prob{\xi_n W^p + u(W,t)>t}}{\Prob{\xi_n W^p>t}}\le \left(1-2\varepsilon\right)^{-\gamma}.
	\end{equation}
	Combining \eqref{eq:DegIneq}, \eqref{eq:er}, \eqref{eq:lb1} and \eqref{eq:ub1}, we obtain
	$$\left(1+2\varepsilon\right)^{-\gamma} \le \liminf_{t \to \infty} \frac{\Prob{D_n>t}}{\Prob{\xi_n W^p>t}} \le \limsup_{t \to \infty} \frac{\Prob{D_n>t}}{\Prob{\xi_n W^p>t}} \le \left(1-2\varepsilon\right)^{-\gamma}.$$
	Since $\varepsilon>0$ was chosen arbitrarily, taking $\varepsilon \to 0$ now yields \eqref{eq:DWeq} concluding the proof.
\end{proof}

\subsection{Approximating the degrees in terms of the weights} A crucial step in proving Theorem \ref{thm.PoiLim} is to compare the degree sequence $(D_{n,x})_{x \in \Delta_n}$ with a proper transformation of the weight sequence $(W_x)_{x \in \Delta_n}$. The following theorem shows that the large degrees behave asymptotically like a power of the weights of the associated vertices. The quantile function $q(\cdot)$ was defined in \eqref{eq:qf}.

\begin{theorem}\label{thm:exp}
	Assume that (A1)-(A4) are satisfied and let $(t_n)_{n \in \N}$ be either an intermediate sequence or $\mathbf{1}$. Then for any $a>0$,
	$$
	\lim_{n \to \infty} \frac{1}{t_n} \E \sum_{x \in \Delta_n} \mathds{1}\{D_{n,x} \ge q(n/t_n) \xi_n a \ge \xi_n W_x^p\}=0
	$$
	and
	$$
	\lim_{n \to \infty} \frac{1}{t_n} \E \sum_{x \in \Delta_n} \mathds{1}\{\xi_n W_x^p \ge q(n/t_n)\xi_n a \ge D_{n,x}\} =0.
	$$
\end{theorem}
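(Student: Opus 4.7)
By assumption (A2) applied with $B=S_n$ and the relation $\E|\Delta_n|/n\to 1$ from \eqref{eqn:Delta}, both statements reduce to showing that
$$
\frac{n}{t_n}\,\Prob{D_n\ge q(n/t_n)\xi_n a\ge \xi_n W^p}\longrightarrow 0
$$
and the analogous statement with $D_n$ and $\xi_n W^p$ interchanged. I concentrate on the first; the second is symmetric. For a fixed small $\delta>0$, split according to how close $W^p$ is to the threshold $q(n/t_n)a$:
\begin{align*}
\Prob{D_n\ge q(n/t_n)\xi_n a\ge \xi_n W^p}
&\le \Prob{(1-\delta)q(n/t_n)a\le W^p\le q(n/t_n)a}\\
&\quad+\Prob{D_n\ge q(n/t_n)\xi_n a,\ W^p\le(1-\delta)q(n/t_n)a}.
\end{align*}

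The first (\emph{boundary}) summand is handled using that $W^p$ has a regularly varying tail of exponent $\gamma$ together with the defining property $\Prob{W^p>q(n/t_n)}\sim t_n/n$; after multiplication by $n/t_n$ it converges to $\bigl((1-\delta)^{-\gamma}-1\bigr)a^{-\gamma}$, a quantity that vanishes once $\delta\to 0$ is taken at the end.

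The second (\emph{bulk}) summand is the technical core. On its event $D_n-\xi_n W^p\ge \delta\xi_n q(n/t_n)a$, while (A3) gives $|\E_W D_n-\xi_n W^p|\le C\max\{1,W^{p(1-\varrho)}\}$; on $\{W^p\le(1-\delta)q(n/t_n)a\}$ this correction is of order $q(n/t_n)^{1-\varrho}=o(q(n/t_n))$, so for large $n$ we have $D_n-\E_W D_n\ge \tfrac{1}{2}\delta\xi_n q(n/t_n)a$ on this event. Conditioning on $W$ and applying the Markov inequality with (A4) for an integer $m>2\gamma$ produces an upper bound of the shape
$$
C'\bigl(\delta\xi_n q(n/t_n)a\bigr)^{-m}\Bigl(a_m\E\bigl[W^{mp/2};\,W^p\le(1-\delta)q(n/t_n)a\bigr]+C_m\Bigr).
$$
Since $m/2>\gamma$, Karamata's theorem bounds $\E[W^{mp/2};\,W^p\le K]$ by a multiple of $K^{m/2-\gamma}\tilde L(K)$ for some slowly varying $\tilde L$, and combining with the quantile relation $q(n/t_n)^{-\gamma}\tilde L(q(n/t_n))\sim t_n/n$ shows that, after multiplication by $n/t_n$, the bulk contribution is $O(q(n/t_n)^{-m/2})\to 0$. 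Sending $\delta\to 0$ concludes the first statement.

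The second statement is treated analogously with the split performed at $(1+\delta)q(n/t_n)a$ from above: its boundary part converges to $(1-(1+\delta)^{-\gamma})a^{-\gamma}$, and on its bulk $\{W^p\ge(1+\delta)q(n/t_n)a\}$, (A3) yields $\E_W D_n-D_n\ge \delta\xi_n W^p/(2(1+\delta))$ for large $n$, whereupon conditional Markov via (A4), the elementary estimate $W^{-mp/2}\mathds{1}\{W^p\ge K\}\le K^{-m/2}$, and the tail asymptotic for $W^p$ together deliver an $O(q(n/t_n)^{-m/2}\cdot t_n/n)$ bound. The main obstacle throughout is the bookkeeping of slowly varying factors in the bulk estimate for the first statement: the tail of $W^p$ contributes one slowly varying function, Karamata's theorem another, and both must be absorbed by the polynomial decay $q(n/t_n)^{-m/2}$, so $m$ has to be chosen large enough. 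The second statement is technically easier because on its bulk event $W^p$ is already large, so only the upper tail of $W$ enters and no truncated moment bound is needed.
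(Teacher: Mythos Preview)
Your proposal is correct. For the first limit your argument coincides with the paper's: the paper packages the bulk step (conditional Markov inequality of order $m>2\gamma$, the moment bound (A4), and Karamata for the truncated moment $\E[W^{mp/2};W^p\le K]$) into a separate lemma (Lemma~\ref{lem:part}, first part), but the content is identical to what you wrote.

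For the second limit your argument is genuinely different from, and somewhat slicker than, the paper's. The paper keeps the deviation threshold independent of $W$, namely $|D_n-\E_W D_n|>\varepsilon q(n/t_n)$; this forces an additional split of the bulk $\{W^p>(a+\varepsilon)q(n/t_n)\}$ at the level $W^p=(aq(n/t_n))^{1+\varrho}$, so that on the intermediate range the (A3) correction is still $o(q(n/t_n))$, and then invokes the second part of Lemma~\ref{lem:part}, which in turn uses a $\tau$-th–power Chebyshev argument with $\tau\in(0,\min\{1,\gamma\})$ and a Karamata estimate for the upper-tail moment $\E[W^{p\tau};W^p>K]$. You instead observe that on the bulk event $D_n\le \xi_n q(n/t_n)a\le \xi_n W^p/(1+\delta)$, so the deviation $|D_n-\E_W D_n|$ is at least a constant multiple of $\xi_n W^p$; Markov of order $m$ with (A4) then produces a conditional bound of order $W^{-mp/2}$, which on $\{W^p\ge(1+\delta)q(n/t_n)a\}$ is trivially dominated by $((1+\delta)q(n/t_n)a)^{-m/2}$, and the remaining factor is just the tail probability $\Prob{W^p>(1+\delta)q(n/t_n)a}\sim c\,t_n/n$. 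This avoids both the extra case split and the $\tau$-trick; the price is that the threshold in the Markov inequality is $W$-dependent, but that is harmless here. Either route works; yours is shorter and uses only the ingredients already needed for the first limit.
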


Before proceeding to the proof, let us first comment on this result. Consider the case when $t_n=1$ for all $n \in \N$. Then, the theorem above essentially states that the two sequences $(D_{n,x})_{x \in \Delta_n}$ and $(\xi_n W_x^p)_{x \in \Delta_n}$, when rescaled, look similar when $n$ is large. Since $(\xi_n W_x^p)_{x \in \Delta_n}$ is a collection of i.i.d.\ random variables, one can expect it to converge to a Poisson process, when appropriately transformed. In Lemma \ref{lem:welim}, we establish this fact. This along with Theorem \ref{thm:exp} implies that the point process formed by $(D_{n,x})_{x \in \Delta_n}$ can be approximated after rescaling by a suitable Poisson process, as given by Theorem~\ref{thm.PoiLim}.

We prepare the proof of Theorem \ref{thm:exp} by first noting some properties of $q(\cdot)$ and establishing two lemmas. It is not hard to see that $q(\cdot)$ is non-decreasing and that $q(t)$ diverges to infinity as $t \to \infty$. It follows from \cite[Theorem 3.6 and Remark 3.3]{Re07} that for all $c>0$, as $t \to \infty$,
\begin{equation}\label{eq:stail}
\Prob{W^p>cq(t)} \sim \frac{c^{-\gamma}}{t}.
\end{equation}

The next result shows that the function $q(\cdot)$ is regularly varying.

\begin{lemma}\label{lem:bsl} Assume (A1) is satisfied. Then the function $q(\cdot)$ is regularly varying with index $1/\gamma$.
\end{lemma}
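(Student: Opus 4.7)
The plan is to recognise $q(\cdot)$ as the tail quantile function of $W^p$ and then invoke the standard inverse theorem for monotone regularly varying functions. First, I would verify that $W^p$ has a regularly varying tail of index $-\gamma$. Using (A1) and \eqref{eq:WeDis},
$$\Prob{W^p > u} \;=\; \Prob{W > u^{1/p}} \;=\; u^{-\beta/p}\, L(u^{1/p}) \;=\; u^{-\gamma}\, \tilde L(u), \qquad u \ge 0,$$
with $\tilde L(u):=L(u^{1/p})$. Since the composition of a slowly varying function with the power $u\mapsto u^{1/p}$ is again slowly varying, $\Prob{W^p>\cdot}\in \mathrm{RV}_{-\gamma}$, and equivalently the non-decreasing function $u\mapsto 1/\Prob{W^p>u}$ lies in $\mathrm{RV}_{\gamma}$.

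Next, by \eqref{eq:qf} the function $q(t)$ is precisely the generalised inverse of $u\mapsto 1/\Prob{W^p>u}$, that is, $q(t)=\inf\{u\ge 0 : 1/\Prob{W^p>u}\ge t\}$. The standard inverse theorem for monotone regularly varying functions (e.g.\ \cite[Proposition 2.6(v)]{Re07}, see also \cite[Proposition 0.8(v)]{Re87}) then yields immediately that the generalised inverse of a function in $\mathrm{RV}_{\gamma}$ with $\gamma>0$ belongs to $\mathrm{RV}_{1/\gamma}$, proving the claim.

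For a self-contained argument that avoids the inverse theorem as a black box, the already-stated asymptotic \eqref{eq:stail} is essentially all that is needed. Fix $a>0$ and $\eps\in(0,a)$. Taking $c=(a+\eps)^{1/\gamma}$ in \eqref{eq:stail} gives
$$\Prob{W^p > (a+\eps)^{1/\gamma} q(t)} \;\sim\; \frac{1}{(a+\eps)\, t} \;<\; \frac{1}{at}$$
for $t$ large, and by the defining infimum in \eqref{eq:qf} this forces $q(at)\le (a+\eps)^{1/\gamma} q(t)$ for such $t$. The matching lower bound $q(at)\ge (a-\eps)^{1/\gamma}q(t)$ follows analogously from \eqref{eq:stail} with $c=(a-\eps)^{1/\gamma}$ combined with the companion inequality $\Prob{W^p \ge q(s)}\ge 1/s$. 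Sending $\eps\downarrow 0$ yields $q(at)/q(t)\to a^{1/\gamma}$, as required.

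The argument is largely a routine application of regular variation theory, so no step is a genuine obstacle. The only mildly delicate point — the only place where care is needed — is matching the asymptotic equivalence in \eqref{eq:stail} with the strict/non-strict inequalities in the infimum definition \eqref{eq:qf}; this is handled uniformly by the two-sided sandwich $\Prob{W^p > q(t)}\le 1/t\le \Prob{W^p\ge q(t)}$, which is valid for every $t\ge 1$ regardless of possible atoms in the distribution of $W^p$.
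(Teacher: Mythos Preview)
Your proof is correct and the primary approach coincides with the paper's: both identify $q$ as the generalised inverse of $h(u)=1/\Prob{W^p>u}\in\mathrm{RV}_\gamma$ and invoke Resnick's inverse theorem for monotone regularly varying functions (you cite \cite[Proposition~2.6(v)]{Re07}, the paper uses the closely related part~(vi) to reach the same conclusion). Your additional self-contained sandwich argument via \eqref{eq:stail} is not in the paper but is a valid alternative route.
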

\begin{proof}
	For a function $T:[0,\infty) \to [0,\infty)$, we write $T^{\leftarrow}(\cdot)$ for its generalised inverse, i.e.,
	$$
	T^{\leftarrow}(t)=\inf\{x \ge 0 : T(x) \ge t\}.
	$$
	Note from \eqref{eq:qf} that 
	$q(t)=\left(\frac{1}{1-F_p}\right)^{\leftarrow}(t)$,
	where $F_p(\cdot)$ is the distribution function of $W^p$. It is straightforward to check using (A1) that the function $h(x)=(1-F_p(x))^{-1}$ is non-negative, non-decreasing, unbounded and is an element of $\mathrm{RV}_{\gamma}$.
	Applying \cite[Proposition 2.6 (vi)]{Re07}, we obtain for all $a > 0$ that as $t \to \infty$,
	\begin{equation*}
	q(at)=\inf\{x \ge 0: h(x) \ge at\}=(a^{-1}h)^{\leftarrow}(t) \sim (a^{-1})^{-1/\gamma}h^{\leftarrow}(t)=a^{1/\gamma}q(t),
	\end{equation*}
	which proves the result.
\end{proof}

Recall for the following lemma that $\E_W$ denotes the conditional expectation with respect to $W$.

\begin{lemma}\label{lem:part}
	Assume that (A1)-(A4) are satisfied and let $(t_n)_{n \in \N}$ be either an intermediate sequence or $\mathbf{1}$. Then, for $a>0$ and $\varepsilon>0$,
	$$
	\limsup_{n \to \infty}\frac{n}{t_n}\Prob{ W^p < aq(n/t_n), |D_n-  \E_W D_n |>\varepsilon q(n/t_n)}=0
	$$
	and
	$$
	\limsup_{n \to \infty}\frac{n}{t_n}\Prob{ W^p > aq(n/t_n), |D_n-  \E_W D_n |>\varepsilon q(n/t_n)}=0.
	$$
\end{lemma}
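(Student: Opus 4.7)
The plan is to prove both statements by a conditional Chebyshev-type bound using assumption (A4), followed by an analysis based on the regular variation of the quantile function $q(\cdot)$ from Lemma~\ref{lem:bsl}. Throughout, I abbreviate $r_n=q(n/t_n)$. Since $(t_n)_{n\in\N}$ is either intermediate or $\mathbf{1}$, one has $n/t_n\to\infty$, so $r_n\to\infty$; and by Lemma~\ref{lem:bsl}, $r_n$ is regularly varying of index $1/\gamma$ in $n/t_n$.

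For the first bound (the event $\{W^p<ar_n\}$), I would condition on $W$ and apply Markov's inequality at integer exponent $m$:
\[
\Prob{|D_n-\E_W D_n|>\varepsilon r_n \mid W}\le \frac{\E_W|D_n-\E_W D_n|^m}{(\varepsilon r_n)^m}\le \frac{a_m W^{mp/2}+C_m}{(\varepsilon r_n)^m}.
\]
On $\{W^p<ar_n\}$, we have $W^{mp/2}<(ar_n)^{m/2}$, so after taking expectations the probability is bounded by
\[
\frac{a_m\, a^{m/2}}{\varepsilon^m\, r_n^{m/2}}+\frac{C_m}{\varepsilon^m\, r_n^m}.
\]
Multiplying by $n/t_n$ and using $r_n\in\mathrm{RV}_{1/\gamma}$, the first term behaves like $(n/t_n)^{1-m/(2\gamma)}$ up to a slowly varying factor, which tends to $0$ provided $m>2\gamma$; the second term vanishes even more quickly. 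Choosing $m\in\N$ with $m>2\gamma$ thus finishes the first claim.

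The second bound (the event $\{W^p>ar_n\}$) is the main obstacle, because a naive Markov estimate on $\E[W^{mp/2}\mathds{1}\{W^p>ar_n\}]$ may diverge for small $\gamma$. The plan is to truncate: for any $K\ge a$, write
\[
\{W^p>ar_n\}=\{ar_n<W^p\le Kr_n\}\cup \{W^p>Kr_n\}.
\]
On the bounded piece, I would apply the same conditional Markov bound (with $m=1$ for simplicity), giving
\[
\Prob{ar_n<W^p\le Kr_n,\, |D_n-\E_W D_n|>\varepsilon r_n}\le \frac{a_1 K^{1/2} r_n^{1/2}+C_1}{\varepsilon r_n}\,\Prob{W^p>ar_n}.
\]
By \eqref{eq:stail}, $\Prob{W^p>ar_n}\le 2a^{-\gamma} t_n/n$ for large $n$, so multiplying by $n/t_n$ gives a quantity of order $K^{1/2}/r_n^{1/2}$, which tends to $0$ as $n\to\infty$ for each fixed $K$. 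For the tail piece, I would use \eqref{eq:stail} again to obtain $\Prob{W^p>Kr_n}\le 2K^{-\gamma}t_n/n$ for $n$ large, so
\[
\limsup_{n\to\infty}\frac{n}{t_n}\Prob{W^p>Kr_n,\,|D_n-\E_W D_n|>\varepsilon r_n}\le 2K^{-\gamma}.
\]
Combining the two pieces yields
\[
\limsup_{n\to\infty}\frac{n}{t_n}\Prob{W^p>ar_n,\,|D_n-\E_W D_n|>\varepsilon r_n}\le 2K^{-\gamma},
\]
and letting $K\to\infty$ concludes the second claim. The key idea in this second part is that truncating $W^p$ at the level $Kr_n$ allows the $m$-th conditional moment bound to be controlled uniformly, while the remaining tail mass is handled directly by the regular variation statement \eqref{eq:stail}.
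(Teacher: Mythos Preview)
Your proof is correct and takes a somewhat different, more elementary route than the paper's. For the first assertion, you simply bound $W^{mp/2}\le (ar_n)^{m/2}$ on the event $\{W^p<ar_n\}$ and then invoke the regular variation of $q$ with $m>2\gamma$; the paper instead applies Karamata's theorem to estimate $\E[W^{mp/2}\mathds{1}\{W^p\le ar_n\}]$ precisely, gaining an extra factor $t_n/n$ before multiplying by $n/t_n$. Both reach the same conclusion, and your version avoids Karamata entirely. For the second assertion, your truncation at level $Kr_n$ (handling the bounded range by a first-moment Markov bound and the remaining tail directly via \eqref{eq:stail}, then sending $K\to\infty$) replaces the paper's device of bounding the conditional probability by its $\tau$-th power with $\tau\in(0,\min\{1,\gamma\})$, which makes $W^{p\tau}$ integrable against the tail $\{W^p>ar_n\}$ and permits a single Karamata estimate. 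Your argument is slightly longer (two pieces plus a limit in $K$) but more transparent and again avoids Karamata; the paper's is a compact one-shot bound at the price of the fractional-power trick.
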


\begin{proof}
	Let $\mathbf{P}_{W}$ denote the conditional probability with respect to $W$. Using the Markov inequality in the second step and (A4) in the third, one has that for any $m \in \N$,
	\begin{align*}
	&\Prob{ W^p < aq(n/t_n), |D_n-  \E_W D_n |>\varepsilon q(n/t_n)}\\
	& \le \E \Big[\mathds{1}\{W^p \le a q(n/t_n)\} \mathbf{P}_{W}\left\{ |D_n -\E_W D_n| >\varepsilon q(n/t_n)\right\}\Big]\\
	&\le \frac{1}{(\varepsilon q(n/t_n))^m}\E \left[\mathds{1}\{W^p \le a q(n/t_n)\}\E_W |D_n - \E_W D_n|^{m} \right]\\
	&\le \frac{1}{(\varepsilon q(n/t_n))^m}\E \left[\mathds{1}\{W^p \le a q(n/t_n)\}a_m W^{\frac{m p}{2}} + C_m\right].
	\end{align*}
	We choose $m > 2\gamma$. Since $W \sim F$ by (A2), it follows from \eqref{eq:WeDis} that $W^{\frac{mp}{2}}$ has a regularly varying tail with index $-2\gamma/m > -1$. Hence using Karamata's theorem (see e.g.\ \cite[Theorem 2.1]{Re07}) in the second step and \eqref{eq:stail} in the third, we have that as $n \to \infty$,
	\begin{align*}
	&\E \left[\mathds{1}\{W^p \le a q(n/t_n)\}W^{\frac{m p}{2}}\right] \le \int_0^{(aq(n/t_n))^{m/2}} \Prob{W^{\frac{mp}{2}}>t}dt\\
	&\sim \left(1-\frac{2\gamma}{m}\right)^{-1}(a q(n/t_n))^{m/2}\Prob{W^{p}>a q(n/t_n)}\\
	&\sim \left(1-\frac{2\gamma}{m}\right)^{-1}(a q(n/t_n))^{m/2} \frac{t_n a^{-\gamma}}{n}.
	\end{align*} 
	Thus, we obtain
	\begin{align*}
	&\limsup_{n \to \infty}\frac{n}{t_n}\Prob{ W^p < aq(n/t_n), |D_n-  \E_W D_n |>\varepsilon q(n/t_n)}\\
	&\le \limsup_{n \to \infty}\frac{n}{t_n(\varepsilon q(n/t_n))^m}\left[ a_m \left(1-\frac{2\gamma}{m}\right)^{-1}(a q(n/t_n))^{m/2} \frac{t_n a^{-\gamma}}{n}+C_m\right]=0,
	\end{align*}
	where in the last step, for the convergence of the second summand, we have used Lemma \ref{lem:bsl} and that $m/\gamma>2$. This proves the first assertion of the lemma. 
	
	Next, we prove the second claim. Choose $\tau \in (0, \min\{1,\gamma\})$. Using Chebyshev's inequality for the second inequality and (A4) in the third, we obtain
	\begin{align*}
	A&:=\limsup_{n \to \infty}\frac{n}{t_n}\Prob{ W^p > aq(n/t_n), |D_n-  \E_W D_n |>\varepsilon q(n/t_n)}\\
	&\leq\limsup_{n \to \infty}\frac{n}{t_n} \E \Big[\mathds{1}\{W^p > a q(n/t_n)\} \left(\mathbf{P}_{W}\left\{  |D_n - \E_W D_n| >\varepsilon q(n/t_n)\right\}\right)^\tau\Big]\\
	& \le \limsup_{n \to \infty}\frac{n}{t_n(\varepsilon q(n/t_n))^{2\tau}} \E \left[\mathds{1}\{W^p > aq(n/t_n)\} \left(\E_W| D_n - \E_W D_n|^2 \right)^{\tau}\right]\\
	&\le \limsup_{n \to \infty}\frac{n}{t_n(\varepsilon q(n/t_n))^{2\tau}} \E \left[\mathds{1}\{W^p > aq(n/t_n)\} a'_2 W^{p\tau}\right]
	\end{align*}
	for some constant $a'_2>0$. Note that
	\begin{align*}
	&\E \left[\mathds{1}\{W^p > aq(n/t_n)\} W^{p\tau}\right] \\
	& = \int_0^\infty \mathbf{P}\{\mathds{1}\{W^{p\tau} > (a q(n/t_n))^\tau\} W^{p\tau}>t\} dt \\
	& = (a q(n/t_n))^\tau \mathbf{P}\{W^{p\tau} > (a q(n/t_n))^\tau\} + \int_{(a q(n/t_n))^\tau}^\infty \mathbf{P}\{W^{p\tau}>t\} dt.
	\end{align*}
	Now by (A1), we have that $W^{p\tau}$ has a regularly varying tail with index $-\beta/(p\tau)<-1$. Hence, using Karamata's theorem in the first step and \eqref{eq:stail} in the second, we obtain 
	\begin{align*}
	A & \le \limsup_{n \to \infty}\frac{n}{t_n(\varepsilon q(n/t_n))^{2\tau}} \left[a'_2 \bigg( 1+ \frac{\tau}{\gamma-\tau}\bigg) (a q(n/t_n))^\tau \Prob{W^p>a q(n/t_n)}\right]\\
	& \le \limsup_{n \to \infty}\frac{n}{t_n\varepsilon^{2\tau} q(n/t_n)^{\tau}} \left[\frac{a'_2 \gamma}{\gamma-\tau} a^{\tau-\gamma}\frac{t_n}{n}\right]=0
	\end{align*}
	concluding the proof.
\end{proof}

We are now ready to prove Theorem~\ref{thm:exp}, which is the key intermediate step to prove the Theorems~\ref{thm.PoiLim} and \ref{thm:hillCons}.

\begin{proof}[Proof of Theorem~\ref{thm:exp}]
	It follows from (A2) that
	\begin{align*}
	A_1 :&=\frac{1}{t_n}\E \sum_{x \in \Delta_n} \mathds{1}\left\{D_{n,x}\ge q(n/t_n) \xi_n a \ge \xi_n W_x^p\right\}\\
	&= \frac{\E |\Delta_n|}{t_n} \Prob{D_n \ge q(n/t_n)\xi_n a \ge \xi_n W^p}.
	\end{align*}
	For ease of notation, we will let
	$$\overline D_n:=D_n-  \E_W D_n  \quad \text{and} \quad E_n:=\big|  \E_W D_n  -\xi_n W^p\big|.$$
	Fix $\varepsilon>0$. Note that
	\begin{align*}
	A_1  &\le \frac{\E|\Delta_n|}{t_n}\Big[\Prob{W^p \in q(n/t_n)[a-\varepsilon,a]}\\
	&\qquad +  \Prob{ W^p < q(n/t_n) (a-\varepsilon), |D_n-\xi_n W^p|>q(n/t_n)\xi_n \varepsilon}\Big]\\
	&\le \frac{\E|\Delta_n|}{t_n}\Big[\Prob{W^p \in q(n/t_n)[a-\varepsilon,a]}\\
	&\qquad +\Prob{ W^p < aq(n/t_n), |\overline D_n|>q(n/t_n)\xi_n \varepsilon - E_n }\Big]=:B_1+B_2.
	\end{align*}
	It follows from \eqref{eq:stail} and \eqref{eqn:Delta} that
	\begin{align*}
	\limsup_{n \to \infty} B_1&=\limsup_{n \to \infty} \frac{\E|\Delta_n|}{t_n}\Prob{W^p \in q(n/t_n)[a-\varepsilon,a]}\\
	&\le \limsup_{n \to \infty} \frac{\E|\Delta_n|}{t_n} (t_n/n)((a-\varepsilon)^{-\gamma}-a^{-\gamma})=((a-\varepsilon)^{-\gamma}-a^{-\gamma}),
	\end{align*}
	where the right-hand side goes to zero as $\varepsilon \to 0$.
	
	\noindent Next we consider $B_2$. Since we are interested in the behaviour as $n\to\infty$ and $q(n/t_n)\to \infty$ as $n\to\infty$, we assume in the following that $n$ is large enough so that
	\begin{equation*}
	q(n/t_n)\xi_n \varepsilon-E_n \geq q(n/t_n)\xi_n \varepsilon - C(aq(n/t_n))^{1-\varrho}\ge q(n/t_n)\xi \varepsilon/2>0,
	\end{equation*}
	where the first inequality holds almost surely by (A3) when $W^p < aq(n/t_n)$.
	Since $\E|\Delta_n|\sim n$ according to \eqref{eqn:Delta}, by Lemma~\ref{lem:part}, it now follows that $\limsup_{n \to \infty}B_2=0$. Hence, we obtain $\lim_{n \to \infty}A_1=0$ proving the first claim.
	
	Next we show that 
	\begin{equation*}
	A_2 := \frac{1}{t_n} \E \sum_{x \in \Delta_n} \mathds{1}\left\{\xi_n W_x^p\ge q(n/t_n)\xi_n a \ge D_{n,x}\right\}
	\end{equation*}
	also tends to 0 as $n \to \infty$. The proof follows a very similar line of argument as in the case of $A_1$. Fix $\varepsilon>0$ and note that for $n$ large enough, $A_2 \le B'_1 + B'_2$, where 
	$$
	B'_1 := \frac{\E|\Delta_n|}{t_n}\Prob{W^p \in q(n/t_n)[a,a+\varepsilon]}
	$$
	and
	$$
	B'_2 := \frac{\E|\Delta_n|}{t_n}\Prob{W^p > q(n/t_n)(a+\varepsilon), |\overline D_n|>q(n/t_n)\xi_n \varepsilon-E_{n}}.
	$$
	That $\lim_{\varepsilon \to 0} \limsup_{n \to \infty} B'_1=0$ follows again from \eqref{eq:stail} and \eqref{eqn:Delta}. For $B'_2$, we obtain
	\begin{align*}
	B'_2& \le \frac{\E|\Delta_n|}{t_n}\Big[\Prob{W^p > (aq(n/t_n))^{1+\varrho}}\\
	&\quad +\Prob{(aq(n/t_n))^{1+\varrho} \ge W^p > aq(n/t_n), |\overline D_n|>q(n/t_n)\xi_n \varepsilon-E_{n}}\Big]\\
	& =:	B'_{2,1} + B'_{2,2}
	\end{align*}
	with $\varrho$ as in (A3). Now, by \eqref{eq:stail} and \eqref{eqn:Delta}, we have that $B'_{2,1}$ goes to zero as $n \to \infty$. For $B'_{2,2}$, note that when $aq(n/t_n))^{1+\varrho} \ge W^p > aq(n/t_n)$, by (A3),
	$$
	q(n/t_n)\xi_n \varepsilon-E_{n} \ge q(n/t_n)\xi_n \varepsilon-C(aq(n/t_n))^{1-\varrho^2}\ge q(n/t_n)\xi\varepsilon/2
	$$
	for $n$ large enough. Hence, Lemma~\ref{lem:part} and \eqref{eqn:Delta} yield
	\begin{equation*}
	\limsup_{n \to \infty}B'_{2,2}
	\le \limsup_{n \to \infty} \frac{\E|\Delta_n|}{t_n}\Prob{W^p > aq(n/t_n), |\overline D_n|>q(n/t_n)\xi\varepsilon/2}=0.
	\end{equation*}
	Thus we obtain $\lim_{n \to \infty} A_2=0$, which concludes the proof.
\end{proof}

\subsection{Proof of Theorem~\ref{thm.PoiLim}}

In this section, we prove Theorem~\ref{thm.PoiLim}, together with the following result, which is an important ingredient for the proof of Theorem~\ref{thm:hillCons}. For $k,n \in\N$ let $\mathcal{D}_{k,n}$ be the random measure
$$
\mathcal{D}_{k,n}=\frac{1}{k} \sum_{x \in \Delta_n} \delta_{\frac{D_{n,x}}{\xi_n q(n/k)}}.
$$

\begin{prop}\label{prop.LimMeasure} Assume that (A1)-(A4) are satisfied. Then for an intermediate sequence $(k_n)_{n \in \N}$, as $n \to \infty$,
	$$
	\mathcal{D}_{k_n,n} \xrightarrow{d} \nu_{\gamma} \quad \text{in }  M_+((0,\infty]).
	$$
\end{prop}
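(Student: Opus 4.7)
The plan is to reduce the assertion to a weak law of large numbers for the i.i.d.\ weight sequence by replacing each scaled degree $D_{n,x}/(\xi_n q(n/k_n))$ with the scaled power of the corresponding weight $W_x^p/q(n/k_n)$, invoking Theorem~\ref{thm:exp}. Since the limit $\nu_\gamma$ is deterministic, vague convergence in distribution in $M_+((0,\infty])$ is equivalent to vague convergence in probability; moreover, $\nu_\gamma$ has no atoms, so every half-open interval $(a,b]$ with $0<a<b\le \infty$ is a continuity set for $\nu_\gamma$, and such sets form a $\pi$-system generating the vague topology. It will therefore suffice to show, for each such $(a,b]$, that $\mathcal{D}_{k_n,n}((a,b]) \xrightarrow{\mathbf{P}} a^{-\gamma}-b^{-\gamma}$ as $n\to\infty$.

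Introducing the auxiliary rescaled weight measure
$$
\mathcal{W}_n := \frac{1}{k_n}\sum_{x\in\Delta_n}\delta_{W_x^p/q(n/k_n)},
$$
I would first show that $|\mathcal{D}_{k_n,n}((a,b])-\mathcal{W}_n((a,b])|\xrightarrow{\mathbf{P}} 0$. For any $a>0$, the quantity $|\mathcal{D}_{k_n,n}((a,\infty])-\mathcal{W}_n((a,\infty])|$ is bounded above by
$$
\frac{1}{k_n}\sum_{x\in\Delta_n}\Bigl[\mathds{1}\{D_{n,x}\ge \xi_n q(n/k_n)\, a > \xi_n W_x^p\}+\mathds{1}\{\xi_n W_x^p \ge \xi_n q(n/k_n)\, a > D_{n,x}\}\Bigr],
$$
whose expectation vanishes as $n\to\infty$ by Theorem~\ref{thm:exp} applied with $t_n=k_n$. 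Since $\mathcal{D}_{k_n,n}((a,b])=\mathcal{D}_{k_n,n}((a,\infty])-\mathcal{D}_{k_n,n}((b,\infty])$ (and likewise for $\mathcal{W}_n$), the same conclusion follows for the interval $(a,b]$. It then remains to prove that $\mathcal{W}_n((a,b]) \xrightarrow{\mathbf{P}} a^{-\gamma}-b^{-\gamma}$.

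Conditionally on $\Delta_n$, the weights $(W_x)_{x\in \Delta_n}$ are i.i.d.\ copies of $W \sim F$, so that $k_n\mathcal{W}_n((a,b])$ is a sum of $|\Delta_n|$ i.i.d.\ Bernoulli$(p_n)$ variables with $p_n:=\mathbf{P}\{q(n/k_n)\, a<W^p\le q(n/k_n)\,b\}$. By \eqref{eq:stail}, $p_n\sim (k_n/n)(a^{-\gamma}-b^{-\gamma})$ as $n \to \infty$, and combining this with $|\Delta_n|/n\xrightarrow{\mathbf{P}} 1$ from \eqref{eqn:Delta} yields
$$
\E\bigl[\mathcal{W}_n((a,b])\,\big|\,\Delta_n\bigr]=\frac{|\Delta_n|}{k_n}\, p_n \xrightarrow{\mathbf{P}} a^{-\gamma}-b^{-\gamma},
$$
while the conditional variance is bounded by $|\Delta_n| p_n/k_n^2 = O_{\mathbf{P}}(1/k_n)\to 0$. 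A conditional Chebyshev inequality followed by unconditioning (dominated convergence applied to $\min(\mathrm{Var}(\cdot\mid \Delta_n)/\varepsilon^2,1)$) then closes the argument.

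The main difficulty has already been handled by Theorem~\ref{thm:exp}: once the strongly dependent degrees can be replaced at the relevant order by an i.i.d.\ functional of the weights, only a routine weak law of large numbers is needed. The intermediate condition $k_n\to\infty$, $k_n/n\to 0$ enters in exactly the expected way --- the first ensures concentration of $\mathcal{W}_n$ (variance of order $1/k_n$), while the second keeps the threshold $q(n/k_n)$ in the regularly varying tail regime where \eqref{eq:stail} may be applied.
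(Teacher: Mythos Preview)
Your proposal is correct and follows essentially the same route as the paper: introduce the weight-based empirical measure $\mathcal{W}_n$ (the paper's $\mathcal{E}_{k_n,n}$), use Theorem~\ref{thm:exp} to show $\mathcal{D}_{k_n,n}((a,\infty])-\mathcal{W}_n((a,\infty])\xrightarrow{\mathbf{P}}0$, and then prove $\mathcal{W}_n((a,b])\xrightarrow{\mathbf{P}}\nu_\gamma((a,b])$ via \eqref{eq:stail} and a conditional Chebyshev argument (the paper's Lemma~\ref{lem:welim}). The only cosmetic difference is that the paper invokes Kallenberg's \cite[Theorem~1.1]{K73} to reduce to finite-dimensional convergence on $\mathcal{I}$-sets, whereas you argue directly that convergence in probability on the $\pi$-system of half-open intervals suffices because the limit is deterministic and atomless; both reductions are standard.
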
	

Similarly as in Theorem~\ref{thm.PoiLim}, we consider the convergence in $M_+((0,\infty])$ since the measure $\nu_\gamma$ is not locally finite at zero.

As a final ingredient for the proof of Theorem~\ref{thm.PoiLim}, we provide Lemma \ref{lem:welim} below, which proves that the transformed weight sequence $(q(n)^{-1}W_x^p)_{x \in \Delta_n}$ converges to a Poisson process as $n \to \infty$, while scaling by $q(n/k_n)$ results in a deterministic measure as limit.

For $k,n \in \N$, define
$$
\mathcal{E}_n=\sum_{x \in \Delta_n} \delta_{\frac{W_x^p}{q(n)}} \quad \text{and} \quad \mathcal{E}_{k,n}=\frac{1}{k} \sum_{x \in \Delta_n} \delta_{\frac{ W_x^p}{q(n/k)}}.
$$
Let $\hat K$ denote the class of relatively compact sets in $(0,\infty]$. Note that the collection of sets $\mathcal{I} \subset \hat K$ given by
\begin{equation}\label{eq:I}
\mathcal{I}=\{(a,b]: 0 < a\le b\le \infty\}
\end{equation}
forms a \textit{dissecting semi-ring}, see e.g.\ \cite{K17}, while the class $\mathcal{U}$ of finite unions of sets from $\mathcal{I}$ forms a \textit{dissecting ring}.
\begin{lemma}\label{lem:welim}
	Let $(k_n)_{n\in \N}$ be an intermediate sequence and let $\eta_\gamma$ be a Poisson process with intensity measure $\nu_\gamma$. Then as $n \to \infty$,
	$$
	\mathcal{E}_n \xrightarrow{d} \eta_{\gamma} \quad \text{in } M_p((0,\infty]) \quad \text{and} \quad \mathcal{E}_{k_n,n} \xrightarrow{d} \nu_{\gamma}\quad \text{in } M_+((0,\infty]).
	$$
\end{lemma}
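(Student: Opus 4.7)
The plan is to derive both assertions from the single tail asymptotic \eqref{eq:stail}, which says that for every $c>0$, $n\,\mathbf{P}\{W^p>c\,q(n)\}\to c^{-\gamma}=\nu_\gamma((c,\infty])$. Equivalently, the measures $n\,\mathbf{P}\{W^p/q(n)\in\cdot\}$ converge vaguely on $(0,\infty]$ to $\nu_\gamma$, and since $n/k_n\to\infty$ likewise $(n/k_n)\,\mathbf{P}\{W^p/q(n/k_n)\in\cdot\}\to\nu_\gamma$ vaguely. The whole proof is then a standard argument for i.i.d.\ heavy-tailed samples, except that the number of summands $|\Delta_n|$ is random; the control needed to bypass this is that $|\Delta_n|/n\xrightarrow{\mathbf{P}}1$ by \eqref{eqn:Delta} and that the weights are i.i.d.\ conditional on $V_n$.

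For the first assertion I would verify convergence of Laplace functionals. Fix a continuous $f:(0,\infty]\to[0,\infty)$ with compact support, necessarily contained in some $[a,\infty]$ with $a>0$. Conditioning on $V_n$ and using that the weights $(W_x)_{x\in\Delta_n}$ are i.i.d.\ under $\mathbf{P}(\,\cdot\mid V_n)$,
$$
\E\bigl[e^{-\mathcal{E}_n(f)}\,\big|\, V_n\bigr]=\phi_n(f)^{|\Delta_n|},\qquad \phi_n(f):=\E\bigl[e^{-f(W^p/q(n))}\bigr].
$$
Applying vague convergence to the continuous, compactly supported function $1-e^{-f}$ gives $n(1-\phi_n(f))\to c(f):=\int(1-e^{-f})\,d\nu_\gamma\in[0,\infty)$. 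Since $|\Delta_n|/n\xrightarrow{\mathbf{P}}1$, this yields $\phi_n(f)^{|\Delta_n|}\xrightarrow{\mathbf{P}}e^{-c(f)}$, and the bound $\phi_n(f)^{|\Delta_n|}\le 1$ combined with dominated convergence gives $\E[e^{-\mathcal{E}_n(f)}]\to e^{-c(f)}$. The right-hand side is the Laplace functional of $\eta_\gamma$, so by the standard criterion for weak convergence of point processes (see e.g.\ \cite[Chapter 3]{Re87} or \cite[Chapter 6]{Re07}), $\mathcal{E}_n\xrightarrow{d}\eta_\gamma$ in $M_p((0,\infty])$.

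For the second assertion the limit $\nu_\gamma$ is deterministic, hence convergence in distribution in $M_+((0,\infty])$ coincides with convergence in probability. It therefore suffices to show $\mathcal{E}_{k_n,n}(f)\xrightarrow{\mathbf{P}}\nu_\gamma(f)$ for every continuous $f\ge 0$ with compact support in $(0,\infty]$. Conditional on $V_n$, $\mathcal{E}_{k_n,n}(f)$ is $k_n^{-1}$ times a sum of $|\Delta_n|$ i.i.d.\ copies of $f(W^p/q(n/k_n))$. Applying vague convergence to $f$ and to $f^2$ (which has the same compact support), one obtains
$$
\frac{n}{k_n}\E\bigl[f(W^p/q(n/k_n))\bigr]\longrightarrow \nu_\gamma(f),\qquad \frac{n}{k_n}\E\bigl[f^2(W^p/q(n/k_n))\bigr]\longrightarrow \nu_\gamma(f^2)<\infty.
$$
Writing the conditional mean as $\E[\mathcal{E}_{k_n,n}(f)\mid V_n]=\frac{|\Delta_n|}{n}\cdot\frac{n}{k_n}\E[f(W^p/q(n/k_n))]$, it converges in probability to $\nu_\gamma(f)$, while the conditional variance is bounded by $\frac{|\Delta_n|}{n\,k_n}\cdot\frac{n}{k_n}\E[f^2(W^p/q(n/k_n))]\xrightarrow{\mathbf{P}}0$ because $k_n\to\infty$. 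The conditional Chebyshev inequality then delivers $\mathcal{E}_{k_n,n}(f)\xrightarrow{\mathbf{P}}\nu_\gamma(f)$.

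The only delicate point is the presence of the random index set $\Delta_n$; the rest is classical. What makes the argument go through cleanly is that the weights remain i.i.d.\ conditional on $V_n$, so that the randomness of $|\Delta_n|$ enters only through the harmless multiplier $|\Delta_n|/n\xrightarrow{\mathbf{P}}1$ in both the Laplace-functional step (via dominated convergence) and the second-moment step.
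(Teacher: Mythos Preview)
Your argument is correct. Both assertions follow cleanly from the conditional i.i.d.\ structure of the weights given $V_n$, the tail asymptotic \eqref{eq:stail}, and the control $|\Delta_n|/n\xrightarrow{\mathbf P}1$ from \eqref{eqn:Delta}; your handling of the random sample size via conditioning and dominated convergence is exactly what is needed.

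The route, however, differs from the paper's. For the Poisson limit the paper does not use Laplace functionals but instead verifies Kallenberg's criterion \cite[Theorem~4.15]{K17}, i.e.\ it checks that $\mathbf P\{\mathcal E_n(A)=0\}\to\mathbf P\{\eta_\gamma(A)=0\}$ for finite unions of half-open intervals and that $\limsup_n\mathbf P\{\mathcal E_n(B)>1\}\le\mathbf P\{\eta_\gamma(B)>1\}$ for single intervals, computing both sides explicitly from the binomial structure conditional on $|\Delta_n|$. For the second assertion the paper invokes \cite[Theorem~1.1]{K73} to reduce to one-dimensional convergence $\mathcal E_{k_n,n}((a,b])\xrightarrow{\mathbf P}\nu_\gamma((a,b])$, which it then proves by the same conditional mean/variance computation you use, but with indicator test functions rather than general $f\in C_c^+$. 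Your Laplace-functional approach is more unified (one computation handles the first part, and the second-moment argument for general $f$ is no harder than for indicators), while the paper's approach keeps all calculations at the level of explicit set probabilities, which some readers may find more transparent. Neither approach gains or loses generality here; they are two standard, interchangeable tool-kits for the same i.i.d.\ heavy-tailed limit theorem, adapted to a random sample size.
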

\begin{proof} Let $\mathcal{I}$ and $\mathcal{U}$ be as above. By \cite[Theorem 4.15]{K17}, the first claim follows if we show that
	\begin{enumerate}[(a)]
		\item $\lim_{n \to \infty}\Prob{ \mathcal{E}_n (A) = 0} =
		\Prob{ \eta_{\gamma} (A) = 0}$ for all $A \in \mathcal{U}$,
		\item $\limsup_{n \to \infty} \Prob{\mathcal{E}_n (B) > 1 } \leq
		\Prob { \eta_{\gamma} (B )>1} $ for all $B \in \mathcal{I}$.
	\end{enumerate}
	To prove (a), fix $A \in \mathcal{U}$ and note that we can write $A=\bigcup_{i=1}^k(a_i,b_i]$ for $k \in \N$, and disjoint intervals $(a_i,b_i]$ with $0<a_i \le b_i \le \infty$ for $1 \le i \le k$. Since given $|\Delta_n|$, the weights $(W_x)_{x \in \Delta_n}$ are independent and distributed as $F(\cdot)$, letting $p_{n,i}=\Prob{W^p \in q(n)(a_i,b_i]}$, we have
	$$
	\Prob{ \mathcal{E}_n (A) = 0}=\E\left[\left(1-\sum_{i=1}^k p_{n,i}\right)^{|\Delta_n|}\right]=\E\left[x_n^{|\Delta_n|/n}\right],
	$$
	where
	$x_n=\left(1-\sum_{i=1}^kp_{n,i}\right)^{n}$. Note that by \eqref{eq:stail},
	$$
	\lim_{n \to \infty}x_n=\exp\left(-\sum_{i=1}^k(a_i^{-\gamma}-b_i^{-\gamma})\right)=\Prob{ \eta_{\gamma} (A) = 0}.
	$$
	Together with $|\Delta_n|/n\xrightarrow{\mathbf{P}}1$ as $n\to\infty$ by \eqref{eqn:Delta}, we see that $x_n^{|\Delta_n|/n}\xrightarrow{\mathbf{P}} \Prob{ \eta_{\gamma} (A) = 0}$ as $n\to\infty$. Since the sequence $(x_n^{|\Delta_n|/n})_{n\in\N}$ is uniformly bounded by one, this yields 
	\begin{equation}\label{eq:(a)}
	\lim_{n \to \infty}\Prob{ \mathcal{E}_n (A) = 0}=\lim_{n \to \infty}\E\left[x_n^{|\Delta_n|/n}\right]=\Prob{ \eta_{\gamma} (A) = 0}
	\end{equation}
	proving (a).
	
	\noindent Next we show (b). Fix $0<a \le b \le \infty$ and let $p_n=\Prob{W^p \in q(n)(a,b]}$. One has
	\begin{align*}
	\Prob{ \mathcal{E}_n ((a,b]) = 1}&=\E\left[\frac{p_n}{1-p_n}|\Delta_n|\left(1-p_n\right)^{|\Delta_n|}\right]. 
	\end{align*}
	Now from \eqref{eq:stail}, we have 
	$$
	\lim_{n \to \infty} \frac{n p_n}{1-p_n}=a^{-\gamma}-b^{-\gamma} \quad \text{and} \quad \lim_{n \to \infty}(1-p_n)^n=e^{-(a^{-\gamma}-b^{-\gamma})}.
	$$
	Note that $|\Delta_n| p_n (1-p_n)^{|\Delta_n| -1} \le 1$ since the left-hand side is the probability that a $\mathrm{Binomial}(|\Delta_n|,p_n)$ random variable takes the value one. Now 
	arguing similarly as for (a) yields
	$$
	\lim_{n \to \infty} \Prob{ \mathcal{E}_n ((a,b]) = 1} = (a^{-\gamma}-b^{-\gamma})e^{-(a^{-\gamma}-b^{-\gamma})}.
	$$
	This along with \eqref{eq:(a)} implies (b), yielding the first assertion of the lemma.
	
	Next we prove the second claim of the lemma. By \cite[Theorem 1.1]{K73}, it is enough to show that for all $B_1,\dots,B_k \in \mathcal{I}$, $k \in \N$,
	$$\left(\mathcal{E}_{k_n,n}(B_1), \dots,\mathcal{E}_{k_n,n}(B_k)\right) \xrightarrow{\mathbf{P}} \left(\nu_{\gamma} (B_1), \dots, \nu_{\gamma} (B_k)\right) \quad \text{as $n \to \infty$},$$
	which in turn follows from the one dimensional convergence
	\begin{equation}\label{eq:EnOne}
	\mathcal{E}_{k_n,n}(B) \xrightarrow{\mathbf{P}} \nu_{\gamma} (B) \quad \text{ as $n \to \infty$}
	\end{equation}
	for any $B \in \mathcal{I}$. Fix $0<a \le b \le \infty$. First note that since $|\Delta_n|/n \xrightarrow{\mathbf{P}} 1$ as $n \to \infty$, by \eqref{eq:stail} and Slutsky's theorem, we have that as $n \to \infty$,
	\begin{equation}\label{eq:Tn}
	T_{n}:=\frac{|\Delta_n|}{k_n} \Prob{W^p \in q(n/k_n)(a,b]} \xrightarrow{\mathbf{P}}a^{-\gamma}-b^{-\gamma}=\nu_\gamma((a,b]).
	\end{equation}
	For $\eps>0$, again using that $|\Delta_n|/n \xrightarrow{\mathbf{P}} 1$ as $n \to \infty$, we can write
	\begin{align*}
	&\limsup_{n \to \infty}\mathbf{P}\left\{\Big| \mathcal{E}_{k_n,n}((a,b]) - T_n \Big|> \eps \right\}\\
	&\le \limsup_{n \to \infty}\E \left[\mathds{1}\left\{\Big| \frac{|\Delta_n|}{n} -1\Big| \le 1\right\} \mathbf{P}\left\{\big|\mathcal{E}_{k_n,n}((a,b]) - T_n \big|>\eps \Big| |\Delta_n|\right\}\right].
	\end{align*}
	Since $\E \left[\mathcal{E}_{k_n,n}((a,b]) \Big| |\Delta_n|\right]=T_n$, using Chebyshev's inequality, we obtain
	\begin{align*}
	&\limsup_{n \to \infty}\mathbf{P}\left\{\Big| \mathcal{E}_{k_n,n}((a,b]) - T_n \Big|> \eps \right\}\\
	&\le  \limsup_{n \to \infty} \eps^{-2}\E \left[\mathds{1}\left\{\Big| \frac{|\Delta_n|}{n} -1\Big| \le 1\right\} {\rm Var}\left(\mathcal{E}_{k_n,n}((a,b]) \Big| |\Delta_n|\right)\right]\\
	&\le  \limsup_{n \to \infty}\frac{2n}{\eps^2 k_n^2} \Prob{W^p \in q(n/k_n)(a,b]} =0,
	\end{align*}
	where in the last step, we have used \eqref{eq:stail}. Since $\eps>0$ was arbitrary, this and \eqref{eq:Tn} imply \eqref{eq:EnOne} by Slutsky's theorem, thus concluding the proof.
\end{proof}
We are now ready to prove Theorem~\ref{thm.PoiLim} and Proposition~\ref{prop.LimMeasure}.
\begin{proof}[Proof of Theorem~\ref{thm.PoiLim} and Proposition~\ref{prop.LimMeasure}]
	Let $\mathcal{I}$ be as in \eqref{eq:I}. By \cite[Theorem 1.1]{K73}, the first assertion in Theorem~\ref{thm.PoiLim} follows if we show that for all $B_1,\dots,B_k \in \mathcal{I}$, $k \in \N$,
	\begin{equation*}
	\left(\mathcal{D}_n(B_1), \dots,\mathcal{D}_n(B_k)\right) \xrightarrow{d} \left(\eta_{\gamma} (B_1), \dots, \eta_{\gamma} (B_k)\right) \quad \text{as $n \to \infty$.}
	\end{equation*}
	Since by Lemma~\ref{lem:welim} we have that as $n \to \infty$,
	\begin{equation*}
	\left(\mathcal{E}_n(B_1), \dots,\mathcal{E}_n(B_k)\right) \xrightarrow{d} \left(\eta_{\gamma} (B_1), \dots, \eta_{\gamma}(B_k)\right),
	\end{equation*}
	it is enough to show that for all $B_1,\dots,B_k \in \mathcal{I}$, $k \in \N$, as $n \to \infty$,
	\begin{equation*}
	\left(\mathcal{D}_n(B_1), \dots,\mathcal{D}_n(B_k)\right) - 	\left(\mathcal{E}_n(B_1), \dots,\mathcal{E}_n(B_k)\right) \xrightarrow{\mathbf{P}} 0,
	\end{equation*}
	which in turn follows if we show that for $0<a \le b\le \infty$,
	\begin{equation}\label{eq:convd}
	\mathcal{D}_{n} ((a,b]) -\mathcal{E}_{n} ((a,b])\xrightarrow{\mathbf{P}} 0 \quad \text{as $n \to \infty$.}
	\end{equation}
	Fix $0<a<\infty$. Note that
	\begin{align}\label{eq:expdiff}
	\E &\big|\mathcal{D}_{n} ((a,\infty]) -\mathcal{E}_{n} ((a,\infty])\big| \nonumber\\
	\le& \E \sum_{x \in \Delta_n} \mathds{1}\{\xi_n^{-1}D_{n,x}\ge q(n) a \ge W_x^p\}+\mathds{1}\{\xi_n^{-1}D_{n,x} \le q(n) a \le  W_x^p\}.
	\end{align}
	Thus, by Theorem \ref{thm:exp} we have that as $n \to \infty$,
	\begin{equation*}
	\mathcal{D}_{n} ((a,\infty])-\mathcal{E}_{n} ((a,\infty]) \xrightarrow{\mathbf{P}} 0.
	\end{equation*}
	For $0<a \le b<\infty$, as
	$$
	\mathcal{D}_{n} ((a,b])-\mathcal{E}_{n} ((a,b])=\left[\mathcal{D}_{n} ((a,\infty])-\mathcal{E}_{n} ((a,\infty])\right] - \left[\mathcal{D}_{n} ((b,\infty])-\mathcal{E}_{n} ((b,\infty])\right],
	$$ 
	it follows that as $n \to \infty$,
	\begin{equation*}
	\mathcal{D}_{n} ((a,b])-\mathcal{E}_{n} ((a,b]) \xrightarrow{\mathbf{P}} 0.
	\end{equation*}
	This shows \eqref{eq:convd} proving \eqref{ProConv}.
	
	\noindent Next, for $a > 0$, using \eqref{ProConv}, we obtain
	\begin{align*}
	\lim_{n \to \infty}\Prob{\xi_n^{-1}q(n)^{-1}\max_{x\in \Delta_n}D_{n,x} \le a}&=\lim_{n \to \infty} \Prob{\mathcal{D}_n ((a,\infty])=0}\\
	&=\Prob{\eta_{\gamma}((a,\infty])=0}=\exp\left(- a^{-\gamma}\right),
	\end{align*}
	which provides the Frechet limit in \eqref{eq:Frl} concluding the proof of Theorem~\ref{thm.PoiLim}.	
	
	Finally we prove Proposition~\ref{prop.LimMeasure}. For an intermediate sequence $(k_n)_{n \in \N}$, by \cite[Theorem 1.1]{K73} and Lemma \ref{lem:welim}, arguing as above, we see that it is enough to show that for $0<a \le b\le \infty$, as $n \to \infty$,
	\begin{equation*}
	\mathcal{D}_{k_n,n} ((a,b])-\mathcal{E}_{k_n,n} ((a,b]) \xrightarrow{\mathbf{P}} 0.
	\end{equation*}
	This follows from Theorem \ref{thm:exp} by arguing as in \eqref{eq:expdiff} and concludes the proof of Proposition~\ref{prop.LimMeasure}.
\end{proof}

Before moving on to prove Theorem~\ref{thm:hillCons}, we provide the following theorem which shows that for any $k \in \N$, with high probability, the vertex with the $k$-th largest weight has the $k$-th largest degree as $n\to\infty$. The result is a consequence of Theorem~\ref{thm.PoiLim}.

\begin{theorem}\label{thm:1-1}
	For $n, k \in \N$ let 
	\begin{align*}
	\mathcal{A}_{k,n} = \big\{ & \text{the vertex in $\Delta_n$ with the $k$-th largest} \\
	& \text{weight has the $k$-th largest degree}   \big\} \cap \big\{ k\leq |\Delta_n| \big\}.
	\end{align*}
	Then
	$$
	\lim_{n \to \infty}\Prob{\mathcal{A}_{k,n}}=1.
	$$
\end{theorem}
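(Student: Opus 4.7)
The plan is to prove the stronger statement that, with probability tending to $1$ as $n\to\infty$, the top $k$ vertices by degree and by weight coincide as ordered $k$-tuples; this at once implies $\Prob{\mathcal{A}_{k,n}}\to 1$. Fix $\delta>0$ and let $W_{(1)}(n)\ge W_{(2)}(n)\ge\dots$ denote the ordered weights of the vertices of $\Delta_n$. Since Lemma~\ref{lem:welim} gives $\mathcal{E}_n\xrightarrow{d}\eta_\gamma$ in $M_p((0,\infty])$ and the $k+1$ largest points of $\eta_\gamma$ are almost surely distinct and finite, one can pick $0<c<M<\infty$ and $n_0\in\N$ such that
$$
\mathcal{W}_n:=\bigl\{W_{(k+1)}^p(n)\ge cq(n),\ W_{(1)}^p(n)\le Mq(n),\ W_{(i)}^p(n)-W_{(i+1)}^p(n)\ge 2cq(n)\text{ for }1\le i\le k\bigr\}
$$
has probability at least $1-\delta$ for every $n\ge n_0$.

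The next step is to transfer this weight separation to the degrees by showing that, with probability at least $1-\delta$ for large $n$, every $x\in\Delta_n$ with $W_x^p>(c/4)q(n)$ satisfies $|D_{n,x}-\xi_n W_x^p|\le 2\varepsilon\xi_n q(n)$ for a small fixed $\varepsilon\in(0,c/2)$. This is carried out in two steps. First, Lemma~\ref{lem:part} (applied with $t_n=\mathbf{1}$, $a=c/4$), combined with (A2), \eqref{eqn:Delta} and Markov's inequality, yields that simultaneously for all such $x$, $|D_{n,x}-\E_{W_x}D_{n,x}|\le \varepsilon q(n)$. Second, on $\mathcal{W}_n$ one has $W_x\le (Mq(n))^{1/p}$ uniformly, so (A3) gives $|\E_{W_x}D_{n,x}-\xi_n W_x^p|\le C(Mq(n))^{1-\varrho}=o(q(n))$ uniformly in $x$. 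Combining these, and using $\xi_n\to\xi>0$, produces the uniform closeness event $\mathcal{C}_n$.

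Simultaneously, the first half of Theorem~\ref{thm:exp} applied with $t_n=\mathbf{1}$ and $a=c/4$, together with Markov's inequality, shows that with probability at least $1-\delta$ for large $n$ no $x\in\Delta_n$ simultaneously satisfies $D_{n,x}\ge (c/4)\xi_n q(n)$ and $W_x^p\le (c/4)q(n)$; call this event $\mathcal{S}_n$. On $\mathcal{W}_n\cap\mathcal{C}_n\cap\mathcal{S}_n$, with $\varepsilon<c/2$, the weight gaps of size $2cq(n)$ dominate the degree fluctuations of size $4\varepsilon\xi_n q(n)$, so the degree ordering of the vertices carrying the top $k$ weights coincides with the weight ordering. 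Any other vertex $x\in\Delta_n$ has $D_{n,x}$ strictly below the degree of the vertex carrying the $k$-th largest weight: if $W_x^p\in((c/4)q(n),W_{(k+1)}^p(n)]$ this follows from $\mathcal{C}_n$ together with the weight gap, while if $W_x^p\le (c/4)q(n)$ it follows from $\mathcal{S}_n$ combined with $W_{(k)}^p(n)\ge 2cq(n)$. Hence $\mathcal{A}_{k,n}$ holds on an event of probability at least $1-3\delta$ for large $n$; letting $\delta\to0$ completes the proof.

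The principal obstacle is the uniform closeness $\mathcal{C}_n$: one has to pass from the pointwise concentration in Lemma~\ref{lem:part} to uniform control over the (random) set of large-weight vertices, which is where the factor $n/t_n$ in Lemma~\ref{lem:part} combined with $\E|\Delta_n|\sim n$ is crucial. Once $\mathcal{C}_n$ and $\mathcal{S}_n$ are in place, the identification of the top $k$ degree vertices with the top $k$ weight vertices reduces to a routine case analysis exploiting the almost-sure separation of the top Poisson points in the limit.
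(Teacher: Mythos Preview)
Your proof is correct and follows a somewhat different route from the paper's. The paper works on the \emph{degree} side: it fixes $a>0$, bounds $\Prob{\mathcal{A}_{k,n}^c}$ by the probability of an order inversion $\{W_x<W_y,\ D_{n,x}\ge D_{n,y}>a\xi_n q(n)\}$ plus $\Prob{D_{(k)}(n)\le a\xi_n q(n)}$, and controls the inversion probability by partitioning $(a,\infty]$ into intervals of $\nu_\gamma$-mass at most $\delta$, using the Poisson limit $\mathcal{D}_n\xrightarrow{d}\eta_\gamma$ from Theorem~\ref{thm.PoiLim} to bound the chance that two rescaled degrees land in the same interval, and Theorem~\ref{thm:exp} to rule out degree/weight crossings at the partition points; sending $\delta\to0$ and then $a\to0$ concludes. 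You instead work on the \emph{weight} side: Lemma~\ref{lem:welim} delivers a quantitative separation of the top $k{+}1$ rescaled weights, and you then transfer this gap structure to the degrees via a uniform concentration bound assembled from Lemma~\ref{lem:part} and (A3), together with the no-crossing event from Theorem~\ref{thm:exp} to control small-weight vertices. Your argument yields the slightly stronger statement that the ordered $k$-tuples of top-degree and top-weight vertices coincide, and it does not invoke Theorem~\ref{thm.PoiLim}; the paper's proof, on the other hand, is shorter once Theorem~\ref{thm.PoiLim} is available and avoids making the uniform-in-$x$ concentration step explicit. One cosmetic point: your fluctuation bound is really $2\varepsilon q(n)$ rather than $2\varepsilon\xi_n q(n)$ (Lemma~\ref{lem:part} gives $\varepsilon q(n)$ and the (A3) term is $o(q(n))$), but since $\xi_n\to\xi>0$ this does not affect the comparison with the gap $2c\,\xi_n q(n)$.
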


\begin{proof}
	Fix $a>0$ and note that
	\begin{align*}
	\Prob{\mathcal{A}_{k,n}^c} &\le \Prob{W_x <W_y, D_{n,x} \ge D_{n,y}, D_{n,x} >a \xi_nq(n) \text{ for some } x,y \in \Delta_n}\\
	&\quad +\Prob{D_{(k)}(n) \le a \xi_n q(n)} =: A+B,
	\end{align*}
	where as a convention, we let $D_{(k)}(n)=-\infty$ if $k >|\Delta_n|$.
	
	Fix $\delta>0$ small and partition $(a,\infty]$ as $(a_0,a_1, a_2, \dots, a_m,a_{m+1})$ with $a_0=a$ and $a_{m+1}=\infty$ for a fixed integer $m \in \N$ so that $\nu_\gamma((a_i,a_{i+1}]) \le \delta$ for all $i=0, \dots, m$. This leads to
	\begin{align*}
	&A\le \sum_{i=0}^m \Prob{\mathcal{D}_n((a_i,a_{i+1}]) \ge 2} + 
	\sum_{i=0}^{m}\E \sum_{x \in \Delta_n} \mathds{1}\{\xi_n W_x^p\le q(n)\xi_n a_i<D_{n,x}\}\\
	&\qquad \qquad + \sum_{i=0}^{m} \E \sum_{x \in \Delta_n}\mathds{1}\{\xi_n W_x^p>q(n) \xi_n a_i\ge D_{n,x}\}.
	\end{align*}
	Since $m$ is fixed, by Theorem~\ref{thm:exp}, the second and the third summand go to zero as $n \to \infty$. On the other hand, by \eqref{ProConv}, we obtain
	\begin{align*}
	&\lim_{n \to \infty} \sum_{i=0}^m \Prob{\mathcal{D}_n((a_i,a_{i+1}]) \ge 2}=\sum_{i=0}^m \Prob{\eta_{\gamma}((a_i,a_{i+1}]) \ge 2} \\
	&= \sum_{i=0}^m e^{-\nu_{\gamma}((a_i,a_{i+1}])}\sum_{j=2}^\infty \frac{\nu_{\gamma}((a_i,a_{i+1}])^j}{j!} \\
	& \le \sum_{i=0}^m \nu_{\gamma}((a_i,a_{i+1}])^2 \le \delta \nu_{\gamma}((a,\infty])=\delta a^{-\gamma},
	\end{align*}
	whence
	$$
	\limsup_{n \to \infty}A \le \delta a^{-\gamma}.
	$$
	Next, note that
	\begin{align*}
	\lim_{n \to \infty} B = \lim_{n \to \infty}\Prob{\mathcal{D}_n((a,\infty]) \le k -1} & =\Prob{\eta_{\gamma}((a,\infty]) \le k -1}\\
	& = e^{-a^{-\gamma}}  \sum_{i=0}^{k-1} \frac{a^{-i \gamma }}{i!}.
	\end{align*}
	Thus we obtain
	$$
	\limsup_{n \to \infty}\Prob{\mathcal{A}_{k,n}^c} \le \delta a^{-\gamma} + e^{-a^{-\gamma}}  \sum_{i=0}^{k-1} \frac{a^{-i \gamma }}{i!}.
	$$
	Since $\delta$ and $a$ were chosen arbitrarily, first taking $\delta \to 0$ and then taking $a \to 0$, we obtain
	$$\limsup_{n \to \infty}\Prob{\mathcal{A}_{k,n}^c}=0$$
	concluding the proof.
\end{proof}

\subsection{Proof of Theorem~\ref{thm:hillCons}}\label{sec:Hill}
In this section, we establish our second main result Theorem~\ref{thm:hillCons}. We first prove Lemma~\ref{lem:hill} below, which shows that for an intermediate sequence $(k_n)_{n \in \N}$, the random variable $D_{(k_n+1)}(n)$ is comparable to $\xi_n q(n/k_n)$ when $n$ is large. We note here that our proof very loosely adapts the proof of \cite[Theorem 10]{RW19}, but the arguments differ significantly. In particular, a parallel of Lemma~\ref{lem:hill} which is a key step in the proof of Theorem~\ref{thm:hillCons}, is proved there adapting certain techniques from \cite{AGS08} to embed the degree sequence of a linear preferential attachment model into a process constructed from a birth process with immigration. Instead, we make use of the convergence in Proposition~\ref{prop.LimMeasure}.

\begin{lemma}\label{lem:hill} Let $(k_n)_{n \in \N}$ be an intermediate sequence and assume that, for all $t>0$, $\mathcal{D}_{k_n,n}((t,\infty]) \xrightarrow{\mathbf{P}} \nu_{\gamma}((t,\infty])$ as $n \to \infty$. Then,
	$$\frac{D_{(k_n+1)}(n)}{\xi_n q(n/k_n)} \xrightarrow{\mathbf{P}} 1 \quad \text{as $n \to \infty$}.$$
\end{lemma}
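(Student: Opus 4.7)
The plan is to convert the statement about the order statistic $D_{(k_n+1)}(n)$ into a statement about the counting measure $\mathcal{D}_{k_n,n}$ and then invoke the hypothesis. The key identity is that for any $c > 0$,
\begin{equation*}
\{D_{(k_n+1)}(n) > c\} = \Bigl\{\sum_{x \in \Delta_n} \mathds{1}\{D_{n,x} > c\} \geq k_n+1\Bigr\},
\end{equation*}
and for $c = t\,\xi_n q(n/k_n)$ the right-hand side becomes $\{k_n\mathcal{D}_{k_n,n}((t,\infty]) \geq k_n+1\}$. Combined with the strict monotonicity of the limit $t \mapsto \nu_\gamma((t,\infty]) = t^{-\gamma}$, this will give a two-sided sandwich at $t=1$.

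Concretely, I would fix $\varepsilon \in (0,1)$ and handle the upper and lower deviations separately. For the upper direction,
\begin{equation*}
\mathbf{P}\bigl\{D_{(k_n+1)}(n) > (1+\varepsilon)\,\xi_n q(n/k_n)\bigr\} = \mathbf{P}\bigl\{\mathcal{D}_{k_n,n}((1+\varepsilon,\infty]) \geq (k_n+1)/k_n\bigr\}.
\end{equation*}
By hypothesis the random variable on the right converges in probability to $(1+\varepsilon)^{-\gamma} < 1$, while the threshold $(k_n+1)/k_n \to 1$, so this probability tends to $0$. For the lower direction, the event $\{D_{(k_n+1)}(n) < (1-\varepsilon)\,\xi_n q(n/k_n)\}$ forces at most $k_n$ of the degrees in $\Delta_n$ to be $\geq (1-\varepsilon)\,\xi_n q(n/k_n)$, hence also at most $k_n$ of them to exceed this threshold strictly, so it is contained in $\{\mathcal{D}_{k_n,n}((1-\varepsilon,\infty]) \leq 1\}$. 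Since this counting random variable converges in probability to $(1-\varepsilon)^{-\gamma} > 1$, the probability again vanishes. Adding the two estimates yields $\mathbf{P}\{|D_{(k_n+1)}(n)/(\xi_n q(n/k_n)) - 1| > \varepsilon\} \to 0$, which is the desired convergence in probability.

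I do not expect any genuine obstacle: the whole argument is a sandwich driven by the fact that the limiting counting mass $t^{-\gamma}$ is strictly below $1$ on the right of $t=1$ and strictly above $1$ on its left. The only minor care needed is in distinguishing strict versus non-strict inequalities when comparing the integer-valued degrees with the real-valued thresholds $(1\pm\varepsilon)\,\xi_n q(n/k_n)$, and in restricting to the high-probability event $\{k_n+1 \leq |\Delta_n|\}$ so that the order statistic $D_{(k_n+1)}(n)$ is well defined (this holds with probability tending to one since $|\Delta_n|/n \xrightarrow{\mathbf{P}} 1$ by \eqref{eqn:Delta} while $k_n/n \to 0$). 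Both issues are absorbed by the slack coming from the strict separation of the limits from $1$.
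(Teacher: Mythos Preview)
Your proposal is correct and follows essentially the same route as the paper: both arguments rest on the observation that the event $\{D_{(k_n+1)}(n)>(1+\varepsilon)\xi_n q(n/k_n)\}$ forces $\mathcal{D}_{k_n,n}((1+\varepsilon,\infty])>1$ while its limit $(1+\varepsilon)^{-\gamma}<1$, and the symmetric observation at $1-\varepsilon$. The only cosmetic difference is that the paper phrases this as a proof by contradiction, whereas you argue directly; your direct version is in fact slightly cleaner.
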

\begin{proof}
	For $t>0$ let $G_n(t)=\mathcal{D}_{k_n,n}((t,\infty])$ and $G(t)=\nu_\gamma((t,\infty])=t^{-\gamma}$. By our assumption, we have that for all $t>0$,
	\begin{equation}\label{eq:c}
	G_n(t) \xrightarrow{\mathbf{P}}G(t) \quad \text{as }\; n \to \infty.
	\end{equation}
	We will prove the result by contradiction. Fix $\varepsilon>0$ and let $\mathcal{A}_n$ and $\mathcal{B}_n$ denote the events
	$$\mathcal{A}_n=\left\{\frac{D_{(k_n+1)}(n)}{\xi_n q(n/k_n)}>1+\varepsilon\right\} \quad \text{and} \quad \mathcal{B}_n=\left\{\frac{D_{(k_n+1)}(n)}{\xi_n q(n/k_n)}<1-\varepsilon\right\}.$$
	Assume that $\limsup_{n \to \infty} \Prob{\mathcal{A}_n} \ge \delta>0$ for some $\delta>0$. Since under $\mathcal{A}_n$, there are at least $k_n+1$ points of $\mathcal{D}_{k_n,n}$ in the interval $(1+\varepsilon, \infty]$, one has that $G_n(1+\varepsilon) \ge (k_n+1)/k_n>1$ on $\mathcal{A}_n$ while $G(1+\varepsilon)=(1+\varepsilon)^{-\gamma}<1$. Hence, we obtain
	$$
	\limsup_{n \to \infty}\Prob{|G_n(1+\varepsilon)-G(1+\varepsilon)|>1-(1+\varepsilon)^{-\gamma}} \ge \limsup_{n \to \infty}\Prob{\mathcal{A}_n}\ge \delta>0
	$$
	contradicting \eqref{eq:c}. Thus, we obtain that $\lim_{n \to \infty}\Prob{\mathcal{A}_n}=0$. Noting that under $\mathcal{B}_n$, one has that $G_n(1-\varepsilon) \le 1$, while $G(1-\varepsilon)=(1-\varepsilon)^{-\gamma}>1$, a similar argument as above shows that 
	$\lim_{n \to \infty}\Prob{\mathcal{B}_n}=0$. Since $\varepsilon>0$ was arbitrarily chosen, this proves the result.
\end{proof}
Before proving Theorem~\ref{thm:hillCons}, we need to establish another lemma.
\begin{lemma}\label{lem:contmap}
	Assume that, for all $t>0$, $\mathcal{D}_{k_n,n}((t,\infty]) \xrightarrow{\mathbf{P}} \nu_{\gamma}((t,\infty])$ as $n \to \infty$. Then for $0<a<b<\infty$, as $n \to \infty$,
	$$	\int_{a}^{b}\mathcal{D}_{k_n,n}((y,\infty])\frac{dy}{y}\xrightarrow {\mathbf{P}}\int_{a}^{b}\nu_{\gamma}((y,\infty])\frac{ dy}{y}.$$
\end{lemma}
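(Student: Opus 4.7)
Write $G_n(y) := \mathcal{D}_{k_n,n}((y,\infty])$ and $G(y) := \nu_\gamma((y,\infty]) = y^{-\gamma}$. The assumption gives $G_n(y) \xrightarrow{\mathbf{P}} G(y)$ for every $y > 0$, and we wish to transfer this pointwise convergence through the integral against $dy/y$ on $[a,b]$. The cleanest route is first to upgrade the pointwise-in-probability convergence to \emph{uniform}-in-probability convergence on $[a,b]$, and then to bound the integral in sup-norm since $1/y$ is integrable (actually bounded) on $[a,b]$.

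The upgrade step is a Pólya-type argument exploiting monotonicity. For each $n$ the random function $y \mapsto G_n(y)$ is non-increasing in $y$, and the limit $G(y) = y^{-\gamma}$ is continuous, hence uniformly continuous on $[a,b]$. Fix $\delta > 0$ and choose a partition $a = y_0 < y_1 < \cdots < y_N = b$ fine enough that $G(y_{i-1}) - G(y_i) < \delta/2$ for every $i$. For any $y \in [y_{i-1}, y_i]$, monotonicity of $G_n$ and $G$ yields
\begin{align*}
G_n(y_i) - G(y_{i-1}) \;\le\; G_n(y) - G(y) \;\le\; G_n(y_{i-1}) - G(y_i),
\end{align*}
and writing $G_n(y_{i-1}) - G(y_i) = [G_n(y_{i-1}) - G(y_{i-1})] + [G(y_{i-1}) - G(y_i)]$ and similarly for the lower bound, I obtain
\begin{align*}
\sup_{y \in [a,b]} |G_n(y) - G(y)| \;\le\; 2\max_{0 \le i \le N} |G_n(y_i) - G(y_i)| + \tfrac{\delta}{2}.
\end{align*}
Since $N$ is fixed, the maximum on the right converges to $0$ in probability, so $\mathbf{P}\{\sup_{y \in [a,b]} |G_n(y) - G(y)| > \delta\} \to 0$.

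From this uniform control the conclusion is immediate: using $|1/y| \le 1/a$ on $[a,b]$,
\begin{align*}
\left|\int_a^b \bigl(G_n(y) - G(y)\bigr)\,\frac{dy}{y}\right| \;\le\; \log(b/a)\cdot \sup_{y \in [a,b]} |G_n(y) - G(y)|,
\end{align*}
which tends to $0$ in probability by the previous step, giving the claim. The main (and essentially only) delicate point is the Pólya step; once monotonicity of $G_n$ and continuity of $G$ are combined, everything else is a deterministic bound, and I expect no additional obstacle beyond carefully recording the partition argument above.
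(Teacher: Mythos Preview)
Your proof is correct and follows essentially the same strategy as the paper: both exploit the monotonicity of $y \mapsto \mathcal{D}_{k_n,n}((y,\infty])$ together with a finite partition of $[a,b]$ to reduce to the assumed pointwise convergence at finitely many grid points. The only difference is packaging---you first upgrade to uniform convergence on $[a,b]$ via a P\'olya-type argument and then bound the integral by the sup-norm, whereas the paper directly sandwiches the integral between upper and lower Riemann sums; the two routes are equivalent in spirit and effort.
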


\begin{proof}
	For $k \in \N$ and $y_i=a+(b-a)i/k$ for $0 \le i \le k$, we have that
	$$
	\frac{b-a}{k}\sum_{i=1}^k \frac{\mathcal{D}_{k_n,n}((y_{i},\infty])}{y_{i}} \le \int_{a}^{b}\mathcal{D}_{k_n,n}((y,\infty])\frac{dy}{y} \le \frac{b-a}{k}\sum_{i=1}^k \frac{\mathcal{D}_{k_n,n}((y_{i-1},\infty])}{y_{i-1}}.
	$$
	Now, by our assumption and Slutsky's theorem, the lower and the upper bounds converge in probability to $$\Sigma_l:=\frac{b-a}{k}\sum_{i=1}^k \frac{\nu_\gamma((y_{i},\infty])}{y_{i}}\quad \text{and} \quad \Sigma_u:=\frac{b-a}{k}\sum_{i=1}^k \frac{\nu_\gamma((y_{i-1},\infty])}{y_{i-1}},$$
	respectively, as $n \to \infty$. 
	
	Fix $\varepsilon>0$. Since both $\Sigma_l$ and $\Sigma_u$ converge to $\int_{a}^{b}\nu_{\gamma}((y,\infty])\frac{ dy}{y}$ as $k \to \infty$, choose $k$ large enough so that $|\Sigma_l-\Sigma_u|<\varepsilon/2$. As 
	$$
	\Sigma_l \le \int_{a}^{b}\nu_{\gamma}((y,\infty])\frac{ dy}{y} \le \Sigma_u,$$
	for $k\in \N$ chosen as above, one has
	\begin{align*}
	&\mathbf{P}\left\{\Bigg| \int_{a}^{b}\mathcal{D}_{k_n,n}((y,\infty])\frac{dy}{y}-\int_{a}^{b}\nu_{\gamma}((y,\infty])\frac{ dy}{y}\Bigg|>\varepsilon\right\}\\
	&\le \mathbf{P}\left\{\Bigg| \frac{b-a}{k}\sum_{i=1}^k \frac{\mathcal{D}_{k_n,n}((y_{i},\infty])}{y_{i}}-\Sigma_l\Bigg|>\varepsilon/2 \right\}\\
	&\qquad\qquad\qquad\qquad +\mathbf{P}\left\{\Bigg|\frac{b-a}{k}\sum_{i=1}^k \frac{\mathcal{D}_{k_n,n}((y_{i-1},\infty])}{y_{i-1}}-\Sigma_u\Bigg|>\varepsilon/2 \right\}.
	\end{align*}
	As noted above, both summands in the upper bound go to zero as $n \to \infty$. Noting that the choice of $\varepsilon>0$ was arbitrary, this concludes the proof.
\end{proof}
Finally, we prove Theorem~\ref{thm:hillCons}, which shows the consistency of the Hill estimator.

\begin{proof}[Proof of Theorem~\ref{thm:hillCons}:]
	By Proposition \ref{prop.LimMeasure}, the assumption that, for all $t>0$, $\mathcal{D}_{k_n,n}((t,\infty]) \xrightarrow{\mathbf{P}} \nu_{\gamma}((t,\infty])$ as $n \to \infty$ is satisfied in Lemma \ref{lem:hill} and in Lemma \ref{lem:contmap}. Note that
	$$
	H_{k_{n},n}=\int_{\frac{D_{(k_n+1)}(n)}{\xi_n q(n/k_n)}}^{\infty}\mathcal{D}_{k_n,n}((y ,\infty])\frac {dy}{y}.
	$$
	For $M>1$, by Lemma~\ref{lem:contmap}, we have that as $n \to \infty$,
	$$
	\int_{1}^{M}\mathcal{D}_{k_n,n}((y,\infty])\frac{dy}{y}\xrightarrow {\mathbf{P}}\int_{1}^{M}\nu_{\gamma}((y,\infty])\frac{ dy}{y} = \frac{1}{\gamma} (1-M^{-\gamma}).
	$$
	Next, we claim that as $n \to \infty$,
	\begin{equation}\label{eq:1app}
	\int_{1}^{M}\mathcal{D}_{k_n,n}((y,\infty])\frac{dy}{y}-\int_{\frac{D_{(k_n+1)}(n)}{\xi_n q(n/k_n)}}^{M}\mathcal{D}_{k_n,n}((y ,\infty])\frac {dy}{y} \xrightarrow{\mathbf{P}}0.
	\end{equation}
	To see this, fix $\varepsilon>0$ and note that for $\delta \in(0,1)$,
	\begin{align*}
	&\mathbf{P}\left\{\int_{\min\left\{1, \frac{D_{(k_n+1)}(n)}{\xi_n q(n/k_n)}\right\}}^{\max\left\{1, \frac{D_{(k_n+1)}(n)}{\xi_n q(n/k_n)}\right\}}\mathcal{D}_{k_n,n}((y ,\infty])\frac{dy}{y} >\varepsilon\right\}\\
	&\le  \mathbf{P}\left\{\int_{1-\delta}^{1+\delta}\mathcal{D}_{k_n,n}((y ,\infty])\frac{dy}{y} >\varepsilon\right\}+\mathbf{P}\left\{\Bigg|\frac{D_{(k_n+1)}(n)}{\xi_n q(n/k_n)}-1\Bigg|>\delta \right\}  =:  A+B.
	\end{align*}
	Again by Lemma~\ref{lem:contmap} we have,
	$$
	\limsup_{n \to \infty} A \le  \mathds{1}\left\{\int_{1-\delta}^{1+\delta}\nu_{\gamma}((y,\infty])\frac{ dy}{y} \ge  \varepsilon\right\}=\mathds{1}\left\{ (1-\delta)^{-\gamma}-(1+\delta)^{-\gamma} \geq \gamma\varepsilon\right\},
	$$
	which can be made equal to zero by choosing $\delta>0$ small enough. On the other hand, it follows from Lemma~\ref{lem:hill} that $B \to 0$ as $n \to \infty$, which proves \eqref{eq:1app}. By \cite[Theorem 4.28]{K06}, it is now enough to show that
	\begin{equation}\label{eq:Go'}
	\lim_{ M \rightarrow \infty} \limsup_{ n \rightarrow \infty} \E \left[\int _ { M } ^ { \infty } \mathcal{D}_{k_n,n}((y ,\infty]) \frac {dy } { y }\right] = 0.
	\end{equation}
	Using (A2) for the first equality and substituting $\xi_n y q(n/k_n)$ by $z$ in the second step, we can rewrite the expectation as
	\begin{align*}
	\E \left[\int _ { M } ^ { \infty } \mathcal{D}_{k_n,n}((y ,\infty]) \frac {dy}{y }\right] &=\frac{\E |\Delta_n|}{k_n} \int_{M}^\infty \Prob{D_n>\xi_n y q(n/k_n)}\frac {dy}{y}\\ &=\frac{\E |\Delta_n|}{k_n} \int_{M\xi_n q(n/k_n)}^\infty \Prob{D_n>z}\frac {dz}{z}.
	\end{align*}
	Since $z\mapsto \Prob{D_n>z}$ belongs to $\mathrm{RV}_{-\gamma}$ by Theorem~\ref{thm:deg}, the function $z\mapsto \Prob{D_n>z}/z$ belongs to $\mathrm{RV}_{-\gamma-1}$. Thus, noting that $-\gamma-1<-1$, it follows from Karamata's theorem (see e.g.\ \cite[Theorem 2.1]{Re07}) that as $n \to \infty$,
	$$
	\int_{M\xi_n q(n/k_n)}^\infty \Prob{D_n>z}\frac {dz}{z}\sim \frac{1}{\gamma } \Prob{D_n>M\xi_n q(n/k_n)}.
	$$
	Thus, we obtain
	\begin{align}\label{eq:ub}
	&\limsup_{n \to \infty}\E \left[\int _{M}^{\infty} \mathcal{D}_{k_n,n}((y ,\infty]) \frac {dy } {y}\right]=\limsup_{n \to \infty} \frac{\E |\Delta_n|}{\gamma k_n} \Prob{D_n>M\xi_n q(n/k_n)}\nonumber\\
	&\le \limsup_{n \to \infty}\frac{\E |\Delta_n|}{\gamma k_n} \Prob{W^p>M q(n/k_n)}\nonumber\\
	&\qquad \qquad \quad  +\limsup_{n \to \infty}\frac{\E |\Delta_n|}{\gamma k_n} \Prob{D_n>M \xi_n q(n/k_n) \ge \xi_n W^p}.
	\end{align}
	By (A2) and Theorem \ref{thm:exp}, the second term on the right hand side of \eqref{eq:ub} is equal to zero and by \eqref{eq:stail} and \eqref{eqn:Delta}, the first term is equal to $M^{-\gamma}/\gamma$, which tends to zero as $M \to \infty$. This yields \eqref{eq:Go'}, thus proving the result.
\end{proof}

\section{Applications}\label{sec:app}
In this section, we consider the random graph models {\bf I}--{\bf V} considered in Section~\ref{sec:2} and prove Theorem~\ref{thm:I-V}. For the models {\bf I}--{\bf IV} the assumptions (A2)-(A4) follow from only assuming (A1) and the construction of the graphs. We show this in Section~\ref{subsec1} proving Theorem \ref{thm:I-V} (a)-(c). In Section~\ref{subsec2}, we relate the model {\bf V} to the model {\bf IV} by a coupling argument, which yields Theorem~\ref{thm:I-V} (d).

\subsection{Checking the assumptions for models I-IV}\label{subsec1}
We start by showing that (A2) holds for these models. First note that if the vertex sets $(V_n)_{n \in \N}$ are deterministic with 
\begin{equation}\label{eq:DegWe}
(D_{n,x},W_x)=_d (D_{n,y},W_y) \quad \text{for all $x,y \in V_n$, $n \in \N$,}
\end{equation}
then one can take the degree and the weight of any fixed vertex $x_n \in V_n$ as the typical degree and weight respectively as it is straightforward to see from \eqref{eq:DegWe} that they satisfy (A2). The graphs {\bf I}, {\bf II} and {\bf IV} all satisfy \eqref{eq:DegWe} by symmetry. In the case of {\bf III}, since it has a Poisson process as its vertex set, specifying the typical degree-weight pair requires some justification. As mentioned in Section~\ref{sec:2}, for a Poisson process, its Palm measure is the measure when one adds the origin $0$ to the original Poisson process. Extending the graph to include $0$ as a vertex, it is not hard to see that the degree and the weight of the vertex $0$ satisfy (A2).

Next we check (A3). One typically needs some natural conditions on the model parameters for (A3) to hold. Interestingly, proving (A3) is a crucial step in the investigations of these models in \cite{DHH13,Y06,DW18,NR06}. The following lemma combines several results from \cite{DHH13,Y06,DW18,NR06} with small modifications providing sufficient conditions for (A3) to hold in the graphs {\bf I}--{\bf IV}.
\begin{lemma}\label{lem:A3}
	Assume that (A1) is satisfied. Then the following are true.
	\begin{enumerate}[(a)]
		\item For models {\bf I} and {\bf III}, if $d<\min\{\alpha,\alpha\beta\}$, then (A3) holds for some universal constant $C$ ($C\le v_d$ for {\bf I} while $C=0$ for {\bf III}) with $p=d/\alpha$, $\varrho=1$ and for all $n \in \N$,
		$$\xi_n=\xi=\lambda^{d/\alpha}v_{d}\Gamma\left(1-\frac{d}{\alpha}\right)\E\left[W^{d/\alpha}\right],$$
		where $v_d$ denotes the volume of the unit ball in $\R^d$ and $\Gamma(\cdot)$ is the Gamma function.
		\item For model {\bf II}, if $\beta<d$, then (A3) holds for some universal constant $C$ with $p=d -\beta>0$, $\varrho=\min\{1,(d-\beta)^{-1}\}$ and $\xi_n=\xi=dv_d/(d-\beta)$, $n \in \N$.
		\item For model {\bf IV}, (A3) holds with $p=\xi_n=\xi=\varrho=1$ for all $n \in \N$ and $C=0$.
	\end{enumerate}
\end{lemma}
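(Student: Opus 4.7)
The plan is to verify (A3) separately for each of the four models by computing $\E_W D_n$ explicitly given the typical weight $W$ and then bounding its deviation from the candidate $\xi_n W^p$. In every case one may take the typical vertex to be a fixed point: by symmetry/stationarity the origin for models I, II and III (the third via Palm calculus) and any fixed $x\in[n]$ for IV.

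Model IV is the simplest: conditioning also on $(W_y)_{y\ne x}$, the sum of the Poisson means contributing to the degree equals $W_x\sum_y W_y/L_n=W_x$, so $\E_W D_{n,x}=W$ almost surely, and (A3) holds with $p=\xi=\varrho=1$ and $C=0$. For Model III, Campbell's theorem for the Poisson vertex set gives
$$
\E_W D=\int_{\R^d}\E\bigl[1-e^{-\lambda W W'/|y|^\alpha}\bigr]\,dy,
$$
where $W'\sim F$ is an independent copy of the weight. Polar coordinates and the substitution $u=\lambda W W'/r^\alpha$, combined with $\int_0^\infty u^{-d/\alpha-1}(1-e^{-u})\,du=(\alpha/d)\Gamma(1-d/\alpha)$ (finite since $d<\alpha$), yield exactly $\xi W^{d/\alpha}$; the moment $\E[W^{d/\alpha}]$ is finite because $d/\alpha<\beta$. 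Thus (A3) is attained with $C=0$ and $\varrho=1$.

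Model I is the discrete analog of III:
$$
\E_W D=\sum_{x\in\Z^d\setminus\{0\}}h(|x|,W),\qquad h(r,w):=\E\bigl[1-e^{-\lambda w W'/r^\alpha}\bigr],
$$
which is a Riemann sum for the Model III integral. Since $h(\cdot,w)$ is non-negative, non-increasing in $r$ and bounded by $1$, a standard integral-sum comparison---attaching to each lattice point the unit cube centered at it and using monotonicity---yields
$$
\Bigl|\sum_{x\ne 0}h(|x|,W)-\int_{\R^d}h(|y|,W)\,dy\Bigr|\le v_d
$$
uniformly in $W$, the bound essentially accounting for the missing cube around the origin. This gives (A3) with $\varrho=1$ and $C\le v_d$. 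For Model II, conditioning on $W=U_0^{-1/\beta}$, an edge $\{0,y\}$ appears iff $W\ge|y|$ and $W_y\ge|y|$; since $|y|\ge 1$ on $\Z^d\setminus\{0\}$ this gives
$$
\E_W D=\sum_{y\in\Z^d,\,0<|y|\le W}|y|^{-\beta},
$$
to be compared with $\xi W^p=\int_{|y|\le W}|y|^{-\beta}\,dy$ where $\xi=dv_d/(d-\beta)$ and $p=d-\beta$, both finite thanks to $\beta<d$. The difference splits into (i) the bounded near-origin integral $\int_{|y|<1}|y|^{-\beta}\,dy=v_d/(d-\beta)$; (ii) a Riemann-sum error from approximating the smooth integrand on $\{1\le|y|\le W\}$, controlled by $\sum_{1\le|y|\le W}|y|^{-\beta-1}=O(\max\{1,W^{p-1}\})$; and (iii) a boundary effect from the mismatch between $\{|y|\le W\}$ and its unit-cube cover, concentrated in a shell of volume $O(W^{d-1})$ with integrand $O(W^{-\beta})$, hence of order $O(W^{p-1})$. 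Combining these, $|\E_W D-\xi W^p|\le C\max\{1,W^{p-1}\}$, which is (A3) with $\varrho=\min\{1,1/p\}$.

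The main obstacle is the bookkeeping in Model II: the prescription $\varrho=\min\{1,1/p\}$ is exactly what is required so that the boundary/Riemann-sum error $W^{p-1}$ matches $W^{p(1-\varrho)}$ for all $p>0$ (if $p\le 1$ the error is $O(1)$ and absorbed into the constant, while if $p>1$ one has $W^{p(1-\varrho)}=W^{p-1}$). The other three cases are either exact or reduce to an $O(1)$ monotone Riemann-sum bound and are routine, modulo the polar-coordinate identity for Model III.
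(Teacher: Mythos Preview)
The paper's own proof of this lemma is just a set of citations to \cite{DHH13,DW18,Y06,NR06}; you are actually doing more than the paper, by sketching the computations those references contain. Your treatments of Models {\bf III} and {\bf IV} are correct and complete (in {\bf III} the polar change of variables and the identity $\int_0^\infty u^{-d/\alpha-1}(1-e^{-u})\,du=(\alpha/d)\Gamma(1-d/\alpha)$ indeed give $\E_W D=\xi W^{d/\alpha}$ exactly, and in {\bf IV} the conditional mean is $W$ on the nose). The Model {\bf II} sketch is also correct: the three error pieces combine to $O(\max\{1,W^{p-1}\})$, and your remark that $\varrho=\min\{1,1/p\}$ is precisely what absorbs $W^{p-1}$ into $W^{p(1-\varrho)}$ for all $p=d-\beta>0$ is the right bookkeeping.

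The one genuine soft spot is Model {\bf I}. You claim that ``a standard integral--sum comparison---attaching to each lattice point the unit cube centered at it and using monotonicity'' yields a uniform bound $v_d$. Monotonicity alone does \emph{not} give this in dimensions $d\ge 2$: for the bounded radially decreasing indicator $\mathds{1}\{|y|\le R\}$ one has
\[
\Bigl|\#\{x\in\Z^d:0<|x|\le R\}-v_d R^d\Bigr|
\]
of order $R^{\theta}$ with $\theta>0$ by the Gauss circle problem, so no uniform $O(1)$ bound can come from the cube--monotonicity mechanism. What rescues Model {\bf I} is the smoothness of the profile $r\mapsto h(r,W)=\E[1-e^{-\lambda W W'/r^{\alpha}}]$, which is $C^\infty$ on $(0,\infty)$ and flat at $0$; via Poisson summation (or an equivalent argument) the discrepancy between sum and integral is then essentially the missing origin term, which is bounded. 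Your heuristic ``the bound essentially accounts for the missing cube around the origin'' is therefore on target, but the route you name does not reach it; the precise argument is the content of \cite[Proposition~2.3]{DHH13}, which is exactly what the paper cites.
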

\begin{proof}
	For models {\bf I} and {\bf III}, the result follows from \cite[Proposition 2.3]{DHH13} and \cite[Lemma 4.1 and Proposition 4.3]{DW18}, respectively, while for models {\bf II} and {\bf IV}, it is a consequence of \cite[Lemma 2.1]{Y06} and \cite[Proposition 2.1(ii)]{NR06}, respectively.	
\end{proof}

Finally we check that (A4) holds for the models {\bf I}--{\bf IV} when (A1) is satisfied. 
Note that for models {\bf III} and {\bf IV}, by \cite[Lemma 4.1 and Proposition 4.3]{DW18} and \cite[Proposition 2.1 (ii)]{NR06}, respectively, given $W$, we have that $D_n$ is Poisson distributed with mean $\xi W^p$, where $\xi$ and $p$ are as in Lemma~\ref{lem:A3}. Hence, by properties of the Poisson distribution, (A4) holds trivially for these models.

For the models {\bf I} and {\bf II}, the typical degree $D_n$ and the typical weight $W$ can be chosen as the degree $D_{n,0}$ of the origin and the associated weight $W_0$. Writing $D_{n,0}$ as
$$D_{n,0} = \sum_{x \in V_n} \mathds{1}_{x,n},$$
where $\mathds{1}_{x,n} :=\mathds{1}\left\{\{0,x\}  \text{ is an edge of $G_n$} \right\}$, notice that the random variables $(\mathds{1}_{x,n})_{x \in V_n}$ are conditionally independent given $W_0$. In this special case, the following lemma shows that (A4) always holds. 

\begin{lemma}\label{lem:momg}
	Assume that (A2)-(A3) are in force, that the vertex sets $(V_n)_{n \in \N}$ are deterministic and that $(D_n,W) = \big(\sum_{x \in V_n} \mathds{1}_{x,n}, W_0\big)$, where $(\mathds{1}_{x,n})_{x \in V_n}$ are conditionally independent $\{0,1\}$-valued random variables given $W_0$. Then (A4) holds.
\end{lemma}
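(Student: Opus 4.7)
The plan is to apply Rosenthal's moment inequality conditionally on $W_0$. Write
$$D_n - \E_W D_n = \sum_{x \in V_n} Y_{x,n}, \qquad Y_{x,n} := \mathds{1}_{x,n} - \E_W \mathds{1}_{x,n},$$
so that the centered typical degree is a sum of conditionally independent, centered, $[-1,1]$-valued random variables given $W_0$. For $m=1$, Jensen's inequality reduces the desired bound to the $m=2$ case, so I focus on $m \geq 2$.

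The conditional Rosenthal inequality gives a constant $R_m$ depending only on $m$ with
$$\E_W \Big| \sum_{x \in V_n} Y_{x,n} \Big|^m \leq R_m \left[ \Big( \sum_{x \in V_n} \E_W Y_{x,n}^2 \Big)^{m/2} + \sum_{x \in V_n} \E_W |Y_{x,n}|^m \right] \quad \mathbf{P}\text{-a.s.}$$
Since $|Y_{x,n}| \leq 1$ a.s., both $\E_W Y_{x,n}^2$ and $\E_W |Y_{x,n}|^m$ are dominated by $\E_W \mathds{1}_{x,n}$, so each sum on the right is at most $\E_W D_n$. At this point the problem reduces to controlling $\E_W D_n$, and here (A3) does the work: combined with $\sup_n \xi_n < \infty$ (which holds since $\xi_n \to \xi$) and the elementary bound $\max\{1, W^{p(1-\varrho)}\} \leq 1 + W^p$, it yields a universal estimate
$$\E_W D_n \leq c_1 W^p + c_2 \quad \mathbf{P}\text{-a.s.},$$
with $c_1, c_2$ not depending on $n$.

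Plugging this back into the Rosenthal bound, the two terms become at most $(c_1 W^p + c_2)^{m/2}$ and $c_1 W^p + c_2$, respectively. Using the crude inequality $W^p \leq 1 + W^{mp/2}$ (which holds for every $m \geq 2$ by splitting into $W \leq 1$ and $W > 1$), both quantities can be absorbed into an expression of the form $a_m W^{mp/2} + C_m$ with constants depending only on $m$, which is precisely (A4).

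The main obstacle is essentially bookkeeping: verifying that the constants provided by (A3) translate into a bound on $\E_W D_n$ that is uniform in $n$, so that the resulting $a_m$ and $C_m$ are also independent of $n$. The boundedness $|Y_{x,n}| \leq 1$ is what makes the whole argument tight, since it collapses all conditional moments of $Y_{x,n}$ to the second moment and lets us reduce everything to $\E_W D_n$; without this, one would need direct higher-moment information about the indicators.
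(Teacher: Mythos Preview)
Your argument is correct. The route differs from the paper's: the paper expands $\E_W(D_n-\E_W D_n)^{2k}$ directly as a sum over partitions of $[2k]$, discards all partitions containing a singleton block (by conditional mean-zero), bounds each surviving block by the conditional variance, and then handles odd $m$ via Cauchy--Schwarz between the $2$nd and $4k$th moments. You instead invoke Rosenthal's inequality as a black box, which packages exactly that partition-type argument, and you treat $m=1$ by Jensen rather than Cauchy--Schwarz. Both proofs hinge on the same two facts---that $|Y_{x,n}|\le 1$ collapses all higher conditional moments of the summands down to the variance, and that (A3) with $\sup_n\xi_n<\infty$ gives a uniform-in-$n$ bound $\E_W D_n\le c_1W^p+c_2$---so the difference is really one of presentation: your version is shorter and leans on a named tool, the paper's is self-contained. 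One small point worth flagging in a write-up is that $V_n$ may be countably infinite (as in models {\bf I} and {\bf II}), so Rosenthal is applied to finite truncations and one passes to the limit via Fatou; this is routine since $\E_W D_n<\infty$ a.s.\ by (A3), but it deserves a sentence.
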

\begin{proof}
	We first prove the assertion for all even $m\ge 2$. Let $m=2k$ for some $k \in\N$. Using the conditional independence of the indicators in the first step and (A3) in the second, we have that $\mathbf{P}$-a.s.
	\begin{equation}\label{eq:Var}
	\E_{W} (D_n-\E_W D_n)^2  \le \E_W D_n \le \xi_n W^p + C\max\left\{1,W^{p(1-\varrho)}\right\} \le \xi'  W^p + C'
	\end{equation}
	for all $n \in\N$ for some universal constants $\xi'>0$ and $C'>0$.
	
	For $l\in\N$, recall that a partition $\sigma$ of $[l]$ is defined as a collection of disjoint non-empty sets $\sigma_1,\hdots,\sigma_i\subseteq[l]$ with $i\in\N$, such that $\bigcup_{j=1}^i \sigma_j=[l]$. The sets $\sigma_1,\hdots,\sigma_i$ are called blocks of the partition. By $|\sigma|$ we denote the number of blocks of $\sigma$ (i.e., $|\sigma|=i$), while $|\sigma_j|$ stands for the cardinality of the block $\sigma_j$. Let $\Pi(l)$  be the set of all partitions of $[l]$, while $\Pi_{\ge 2}(l)$ denotes the set of all partitions of $[l]$ with all blocks having size at least two.
	
	Let $p_{x,n,W}:=\E_W \mathds{1}_{x,n}$ for $x \in V_n$, $n \in \N$. Then we have for all $n \in \N, \mathbf{P}\text{-a.s.}$
	\begin{align*}
	\mu_{2k,n,W} :=	&\E_W(D_n-\E_W D_n)^{2k} =  \E_W \left(\sum_{x \in V_n}(\mathds{1}_{x,n}- p_{x,n,W} )\right)^{2k} \\
	=& \sum_{\sigma \in \Pi(2k)} \sum_{\substack{x_1,\dots, x_{|\sigma|} \in V_n\\x_u\not=x_v, u\not=v}} \E_W \prod_{i=1}^{|\sigma|}(\mathds{1}_{x_i,n}- p_{x_i,n,W})^{|\sigma_i|}.
	\end{align*}
	Since the factors are conditionally independent with $\E_W(\mathds{1}_{x,n}-p_{x,n,W})=0$ for all $x \in V_n$, $n \in \N$, the above expectations  vanish for all $\sigma\in\Pi(2k)\setminus\Pi_{\geq 2}(2k)$. Together with the fact that $|\mathds{1}_{x_i,n}-p_{x_i,n,W}|^{|\sigma_i|} \leq (\mathds{1}_{x_i,n}-p_{x_i,n,W})^{2}$ when $|\sigma_i|\geq 2$, this yields that $\mathbf{P}$-a.s.
	\begin{align*}
	\mu_{2k,n,W} & \le \sum_{\sigma \in \Pi_{\ge 2}(2k)} \sum_{\substack{x_1,\dots, x_{|\sigma|} \in V_n\\x_u\not=x_v, u\not=v}} \prod_{i=1}^{|\sigma|} \E_W (\mathds{1}_{x_i,n}- p_{x_i,n,W} )^2 \\
	& \le \sum_{\sigma \in \Pi_{\ge 2}(2k)} \left( \E_W \sum_{x \in V_n}(\mathds{1}_{x,n}-p_{x,n,W})^2 \right)^{|\sigma|} \\
	& = \sum_{\sigma \in \Pi_{\ge 2}(2k)}\left[ \E_W (D_n - \E_W D_n)^2 \right]^{|\sigma|}
	\end{align*}
	for all $n \in \N$. Now, using \eqref{eq:Var} and the fact that $|\sigma| \le k$ for $\sigma \in \Pi_{\ge 2}(2k)$, it follows that
	$$
	\mu_{2k,n,W} \le a_{2k}W^{kp}+C_{2k} \quad \mathbf{P}\text{-a.s.}
	$$
	for all $n \in \N$ for some constants $a_{2k}$ and $C_{2k}$. 	
	This proves the result for all even $m \ge 2$.
	
	Finally, note that for $m=1$, the result is true by \eqref{eq:Var} and the Cauchy-Schwarz inequality. For $m=2k+1$ for some $k \in\N$, the Cauchy-Schwarz inequality and the result for even indices imply that for  some constants $a_{2k+1}$ and $C_{2k+1}$, $\mathbf{P}$-a.s. 
	\begin{align*}
	\E_W|D_n-\E_W D_n|^{2k+1} \le \sqrt{\mu_{2,n,W} \mu_{4k,n,W}} & \le \sqrt{(a_2 W^p + C_2)(a_{4k} W^{2kp} + C_{4k})} \\
	& \le a_{2k+1} W^{\frac{(2k+1)p}{2}} + C_{2k+1}
	\end{align*}
	for all $n \in \N$, which proves the result for all odd $m \ge 1$ concluding the proof.
\end{proof}

\begin{proof}[Proof of Theorem \ref{thm:I-V} (${\rm a}$)-(${\rm c}$)] 
	The condition (A1) is assumed and, as noted above, all the models {\bf I}--{\bf IV} satisfy (A2). The condition (A3) follows from Lemma \ref{lem:A3}. Finally, Lemma~\ref{lem:momg} implies (A4) for models {\bf I} and {\bf II} while it is trivial for models {\bf III} and {\bf IV} as discussed above. Now the result is an immediate consequence of Theorems \ref{thm.PoiLim} and \ref{thm:hillCons}.   
\end{proof}

\subsection{Relating models IV and V}\label{subsec2} As noted in Section~\ref{sec:2}, the models {\bf IV} and {\bf V} bear a lot of similarity. But unlike the other models, in model {\bf V}, there are $n$ vertices in $G_n$ and it has no multiple edges. So the random variable $D_n$ is always less than or equal to $n$, while $W$ can be arbitrarily large, and hence, an assumption like (A3) cannot hold. Thus, we are not able to apply  Theorems \ref{thm.PoiLim} and \ref{thm:hillCons} directly to this model. Instead, we prove Theorem \ref{thm:I-V} (d) by coupling the models {\bf IV} and {\bf V} and using Theorem \ref{thm:I-V} (c).

For each $n\in\N$, we start by constructing a coupling of three graphs $G_n^1$, $G_n^2$, and $G_n^3$ with common vertex set $[n]$ and associated weights $(W_x)_{x\in[n]}$. For $i\in\{1,2,3\}$, let $E_n^i\{x,y\}=E_n^i\{y,x\}$, $x,y\in[n]$, denote the number of edges between the vertices $x$ and $y$ in $G_n^i$. We assume that, for given weights $(W_x)_{x\in[n]}$, the random variables $(E_n^i\{x,y\})_{1 \le x \le y \le n }$ are independent for each $i\in\{1,2,3\}$ with
$$
E_n^1\{x,y\}\sim \mathrm{Poisson}(p'_{xy}),\quad E_n^2\{x,y\}\sim\mathrm{Bernoulli}(p_{xy})
$$
and
$$
E_n^3\{x,y\} \sim \mathrm{Poisson}(p_{xy}),
$$
where
$$
p'_{xy}=W_xW_y/L_n, \quad \text{and} \quad p_{xy}=\min\{W_xW_y/L_n, 1\}
$$
for $x,y\in[n]$. Note that $G_n^1$ and $G_n^2$ are distributed as the random graph models {\bf IV} and {\bf V}, respectively.
Also recall that for $q \in [0,1]$, by the so-called \textit{maximal coupling} which achieves the total variation distance (see e.g.\ \cite[Theorem 2.9]{H16}), one can obtain a coupling $(\hat I, \hat J)$ of $I \sim \mathrm{Bernoulli}(q)$ and $J \sim \mathrm{Poisson}(q)$ so that 
\begin{equation}\label{eq:maxc}
\E|\hat I -\hat J| \le C' q^2
\end{equation}
for some constant $C'$ not depending on $q$. In the following, we assume that $E_n^2\{x,y\}$ and $E_n^3\{x,y\}$ are coupled in this way for all $x,y\in[n]$. Moreover, for all $x,y\in[n]$, we couple $E_n^1\{x,y\}$ and $E_n^3\{x,y\}$ so that $E_n^1\{x,y\}=E_n^3\{x,y\}$ whenever $p_{xy}=p_{xy}'$.

For $i\in\{1,2,3\}$, let $(D_{n,x}^i)_{x \in [n]}$ denote the degree sequence of $G_n^i$ and consider the processes
$$
\mathcal{D}_n^i:=\sum_{x \in [n]} \delta_{\frac{D_{n,x}^i}{q(n)}} \quad\text{and}\quad \mathcal{D}_{k,n}^i:= \frac{1}{k} \sum_{x \in [n]} \delta_{\frac{D_{n,x}^i}{q(n/k)}}
$$
for $k\in\N$.

\begin{proof}[Proof of Theorem \ref{thm:I-V} (${\rm d}$)]
	Recall that we assume (A1) with $\beta>2$. We first prove that as $n \to \infty$,
	\begin{equation}\label{eq:twoproc}
	\mathcal{D}_n^2 \xrightarrow{d} \eta_{\beta} \quad \text{in $M_p((0,\infty])$ } \quad \text{and} \quad \mathcal{D}_{k_n,n}^2 \xrightarrow{d} \nu_{\beta} \quad \text{in $M_+((0,\infty])$.}
	\end{equation}
	For $n \in \N$, let $\mathcal{A}_n$ be the event
	$$
	\mathcal{A}_n=\left\{\max_{1 \le k \le n}W_k^2 \le L_n \right\}.
	$$
	Under $\mathcal{A}_n$, one has $p_{xy}=p'_{xy}$ so that, by construction of the coupling, $G_n^1=G_n^3$. Now, for $q \in (2/\beta,1)$,
	\begin{equation*}
	\Prob{\mathcal{A}_n^c} \le  \mathbf{P}\left\{\max_{1 \le k \le n}W_k^2 >n^q\right\} + \mathbf{P}\left\{L_n<n^q\right\}.
	\end{equation*}
	For the first term in the upper bound, using union bound and (A1), we have
	\begin{align*}
	\limsup_{n\to\infty} \mathbf{P}\left\{\max_{1 \le k \le n}W_k^2 >n^q \right\}  &\le \limsup_{n\to\infty} n\Prob{W^2>n^q}\\
	&= \limsup_{n\to\infty} n^{1-\beta q/2}L(n^{q/2}) = 0.
	\end{align*}
	For the second term, note that since $\beta>2$, by (A1) one has that $\sigma^2=\mathrm{Var}(W)<\infty$. Now using Chebyshev's inequality, letting $\mu=\E W$, we obtain that
	$$\limsup_{n\to\infty} \mathbf{P}\left\{L_n<n^q\right\} \le \limsup_{n\to\infty} \frac{\mathrm{Var}(L_n)}{(n\mu-n^q)^2}= \limsup_{n\to\infty} \frac{n \sigma^2}{(n\mu-n^q)^2}=0,$$
	and hence 
	$$
	\lim_{n \to \infty}\Prob{\mathcal{A}_n^c} = 0.
	$$
	Thus for $\beta>2$, we obtain
	\begin{equation}\label{eq:probcpl}
	\lim_{n \to \infty}\Prob{G_{n}^1\not = G_{n}^3} \le \lim_{n \to \infty}\Prob{\mathcal{A}_n^c} = 0.
	\end{equation}
	Hence from Theorem~\ref{thm:I-V} (c),
	it follows that as $n \to \infty$,
	\begin{equation}\label{eq:new}
	\mathcal{D}_n^3 \xrightarrow{d} \eta_\beta \quad \text{ in }M_p((0,\infty]) \quad \text{and} \quad \mathcal{D}_{k_n,n}^3 \xrightarrow{d} \nu_\beta \quad \text{ in }M_+((0,\infty]).
	\end{equation}
	Next, note that since $\beta>2$, it follows from (A1) that $m_j:=\E W^j<\infty$ for $j \in\{1,2\}$. Let $\mathcal{B}_n$ denote the event
	$$
	\mathcal{B}_{n}=\left\{\Big|\frac{L_n}{n}-m_1\Big| \le \frac{\min\{m_1,m_2\}}{2},\Big|\frac{1}{n}\sum_{i=1}^n W_i^2-m_2\Big| \le \frac{\min\{m_1,m_2\}}{2} \right\}
	$$
	and define
	$$
	D_E=\sum_{x,y=1}^n |E_n^2\{x,y\}-E_n^3\{x,y\}|.
	$$
	Let $(t_n)_{n \in \N}$ be either $(k_n)_{n \in \N}$ or $\mathbf{1}$. Then, using the Markov inequality for $D_E \mathds{1}_{\mathcal{B}_n}$, we obtain
	\begin{align}\label{eq:newsp}
	\Prob{D_E>\sqrt{q(n/t_n)}} \le & \frac{\E[D_E \mathds{1}_{\mathcal{B}_n}]}{\sqrt{q(n/t_n)}} + \mathbf{P}\left\{\Bigg|\frac{L_n}{n}-m_1\Bigg| > \frac{\min\{m_1,m_2\}}{2} \right\}\nonumber \\ & + \mathbf{P}\left\{\Bigg|\frac{1}{n}\sum_{i=1}^n W_i^2-m_2\Bigg| > \frac{\min\{m_1,m_2\}}{2} \right\}.
	\end{align}
	The last two terms in \eqref{eq:newsp} go to zero as $n \to \infty$ by the weak law of large numbers. For the first term, letting $\E_{\bf W}$ denote the conditional expectation given the weights $(W_x)_{x \in [n]}$, using \eqref{eq:maxc} in the first step, we have that $\mathbf{P}$-a.s.
	\begin{align*}
	\E_{\bf W} [D_E \mathds{1}_{\mathcal{B}_n}]  &\le C'\mathds{1}_{\mathcal{B}_n}\sum_{i,j=1}^n\min\{W_iW_j/L_n,1\}^2 \\
	&\le C'\mathds{1}_{\mathcal{B}_n}\frac{\left(\sum_{i=1}^n W_i^2\right)^2}{L_n^2}<C' \frac{9m_2^2}{m_1^2} <\infty.
	\end{align*}
	Hence the first term in \eqref{eq:newsp} also goes to zero as $n \to \infty$. Thus, we obtain
	\begin{equation}\label{eq:t1}
	\lim_{n \to \infty} \Prob{D_E>\sqrt{q(n/t_n)}} =0.
	\end{equation}
	Now note that for any $x \in [n]$, one has that $|D_{n,x}^2-D_{n,x}^3| \le D_E$. Fix $\varepsilon>0$ and $0<a< \infty$. For $\delta>0$ and $n$ large enough so that $1/\sqrt{q(n)}<\delta$, we have
	\begin{align*}
	&\Prob{|\mathcal{D}_n^2((a,\infty])-\mathcal{D}_n^3((a,\infty])| >\varepsilon}\\
	&\le \Prob{\mathcal{D}_n^3((a- D_E/q(n),a+D_E/q(n)] ) \ge 1}\\
	&\le  \Prob{D_E>\sqrt{q(n)}} + \Prob{\mathcal{D}_n^3((a- \delta,a+\delta)) \ge 1}.
	\end{align*}
	On the other hand, for $n$ large enough so that $1/\sqrt{q(n/k_n)}<\delta$, a similar argument yields
	\begin{align*}
	&\Prob{|\mathcal{D}_{k_n,n}^2((a,\infty])-\mathcal{D}_{k_n,n}^3((a,\infty])| >\varepsilon}\\
	&\le  \Prob{D_E>\sqrt{q(n/k_n)}} + \Prob{\mathcal{D}_{k_n,n}^3((a- \delta,a+\delta)) \ge \eps}.
	\end{align*}
	The first terms in the two upper bound above goes to zero by \eqref{eq:t1} as $n \to \infty$. For the second terms, by \eqref{eq:new}, we have
	$$
	\lim_{n \to \infty}\Prob{\mathcal{D}_n^3((a- \delta,a+\delta)) \ge 1}= \Prob{\eta_\beta((a- \delta,a+\delta)) \ge 1},
	$$
	while $\limsup_{n \to \infty} \Prob{\mathcal{D}_{k_n,n}^3((a- \delta,a+\delta)) \ge \eps}$ is bounded by $\mathds{1}\{\nu_\beta((a- \delta,a+\delta)) \ge \eps\}$. Since both tend to zero as $\delta \to 0$, noting that $\varepsilon>0$ was arbitrary, we obtain that for any $0<a< \infty$, as $n \to \infty$,
	\begin{equation*}
	\mathcal{D}_n^2((a,\infty])-\mathcal{D}_n^3((a,\infty]) \xrightarrow{\mathbf{P}}0 \; \text{ and } \; \mathcal{D}_{k_n,n}^2((a,\infty])-\mathcal{D}_{k_n,n}^3((a,\infty]) \xrightarrow{\mathbf{P}}0,
	\end{equation*}
	which in turn imply that for all $0<a \le b \le \infty$, as $n \to \infty$,
	\begin{equation}\label{eq:23}
	\mathcal{D}_n^2((a,b])-\mathcal{D}_n^3((a,b]) \xrightarrow{\mathbf{P}}0 \; \text{ and } \; \mathcal{D}_{k_n,n}^2((a,b])-\mathcal{D}_{k_n,n}^3((a,b]) \xrightarrow{\mathbf{P}}0.
	\end{equation}
	
	Combining \eqref{eq:new} and \eqref{eq:23}, arguing as in the proof of Theorem~\ref{thm.PoiLim} and Proposition~\ref{prop.LimMeasure} by first showing joint convergence in probability for finitely many $\mathcal{I}$-sets and then using \cite[Theorem 1.1]{K73}, we obtain \eqref{eq:twoproc}. The Frechet convergence for the maximum degree of a vertex in $G_n^2$ follows by arguing exactly as in the proof of Theorem~\ref{thm.PoiLim}.
	
	
	Finally we are left to prove the convergence of the Hill estimator for $G_n^2$. Since $\mathcal{D}_{k_n,n}^2 \xrightarrow{d} \nu_\beta$ in $M_+((0,\infty])$ as $n \to \infty$, by Lemma~\ref{lem:hill} with $\xi_n=1$, it follows that
	\begin{equation}\label{eq:lem310}
	\frac{D_{(k_n+1)}^2(n)}{q(n/k_n)} \xrightarrow{\mathbf{P}} 1 \quad \text{as }\; n \to \infty.
	\end{equation}
	For the coupled graphs $G_n^1$, $G_n^2$ and $G_n^3$, let $(D_{(u)}^i)_{u \in [n]}$ denote the order statistics for the degree sequence of $G_n^i$, $i\in\{1,2,3\}$, and let $H_{k_n,n}^i$ be the corresponding Hill estimators, i.e.,
	\begin{equation*}
	H_{ k_n,n}^i = k_n^{-1} \sum _ { u=1} ^ {k_n} \log \frac { D_{(u)}^i(n)} { D_{\left(k_n+1\right)}^i(n)},\; i\in\{1,2,3\}.
	\end{equation*}
	Fix $\varepsilon>0$. Note that there exists an integer $N \in \N$ such that for all $n \ge N$, if 
	$D_E \le \sqrt{q(n/k_n)}$ and $D_{(k_n+1)}^2(n)\geq q(n/k_n)/2$,
	one has 
	$$\Bigg|\log \frac{D_{(u)}^2(n)}{D_{(u)}^3(n)} \Bigg|<\varepsilon/2 \quad \text{for all $1 \le u \le k_n+1$},$$
	so that
	$$|H_{ k_n,n}^2-H_{ k_n,n}^3|=k_n^{-1}\Bigg| \sum_{u=1}^{k_n} \log \frac{D_{(u)}^2(n)}{D_{(k_n+1)}^2(n)}-\sum_{i=1}^{k_n} \log \frac{D_{(u)}^3(n)}{D_{(k_n+1)}^3(n)} \Bigg| \le \varepsilon.$$
	Thus, for $n \ge N$, we have
	$$
	\mathbf{P}\left\{|H_{ k_n,n}^2-H_{ k_n,n}^3|>\varepsilon \right\}\le \Prob{D_E>\sqrt{q(n/k_n)}}+ \mathbf{P}\left\{\Bigg|\frac{D_{(k_n+1)}^2(n)}{q(n/k_n)}-1\Bigg|>\frac{1}{2}\right\}.
	$$
	The first summand in the upper bound tends to zero as $n \to \infty$ by \eqref{eq:t1}, while the second term goes to zero by \eqref{eq:lem310}. As $\eps>0$ was arbitrary, this implies that as $n \to \infty$,
	$$H_{ k_n,n}^2-H_{ k_n,n}^3 \xrightarrow{\mathbf{P}} 0.$$
	On the other hand, by \eqref{eq:probcpl},
	$$\mathbf{P}\left\{H_{ k_n,n}^1 \not = H_{ k_n,n}^3  \right\} \le \Prob{ G_{n}^1\not =  G_{n}^3} \le \Prob{\mathcal{A}_n^c} \to 0$$
	as $n \to \infty$. Finally noting that by Theorem~\ref{thm:I-V} (c), one has $H_{k_n,n}^1 \xrightarrow{\mathbf{P}} 1/\beta$ as $n \to \infty$, Slutsky's theorem yields that as $n \to \infty$,
	$$H_{k_n,n}^2 \xrightarrow{\mathbf{P}} 1/\beta$$
	concluding the proof.
\end{proof}
\section*{Acknowledgements}
The research was supported by the Swiss National Science Foundation Grant No.\ 200021\_175584. Large parts of this paper were written while both authors were employed by the University of Bern. They would also like to thank Federico Pianoforte for some helpful discussions.


\begin{thebibliography}{4}
	\bibitem{AGS08} \textsc{Athreya, K. B.}, \textsc{Ghosh, A. P.} and \textsc{Sethuraman, S.} (2008). Growth of preferential attachment random graphs via continuous-time branching processes. {\em Proc. Indian Acad. Sci. Math. Sci.} \textbf{118} 473--494.
	
	\bibitem{BhattacharyaChenHofstadZwart} \textsc{Bhattacharya, A.}, \textsc{Chen, B.}, \textsc{van der Hofstad, R.} and \textsc{Zwart, B.} (2020). Consistency of the PLFit estimator for power-law data. arXiv:2002.06870. 
	
	\bibitem{Bollobas} \textsc{Bollob\'{a}s, B.} (2001). {\em Random graphs}. Cambridge University Press, Cambridge.
	
	\bibitem{BC18} \textsc{Bonnet, G.} and \textsc{Chenavier, N.} (2020). The maximal degree in a Poisson-Delaunay graph. {\em Bernoulli} \textbf{26} 948--979.
	
	
	\bibitem{CL02a} \textsc{Chung, F.} and \textsc{Lu, L.} (2002). Connected components in random graphs with given expected degree sequences. {\em Ann. Comb.} \textbf{6} 125--145.
	
	\bibitem{CL02b} \textsc{Chung, F.} and \textsc{Lu, L.} (2002). The average distances in random graphs with given expected degrees. {\em Proc. Natl. Acad. Sci. USA} \textbf{99} 15879--15882.
	
	
	\bibitem{Clausetetal2009} \textsc{Clauset, A.}, \textsc{Shalizi, C. R.} and \textsc{Newman, M. E.} (2009). Power-law distributions in empirical data. {\em SIAM Rev.} \textbf{51} 661--703.
	
	\bibitem{DV88} \textsc{Daley, D. J.} and \textsc{Vere-Jones, D.} (1988). {\em An introduction to the theory of point processes}. Springer, New York.
	
	\bibitem{DHH13} \textsc{Deijfen, M.}, \textsc{van der Hofstad, R.} and \textsc{Hooghiemstra, G.} (2013). Scale-free percolation. {\em Ann. Inst. Henri Poincar\'e Probab. Stat.} \textbf{49} 817--838.
	
	\bibitem{DW18} \textsc{Deprez, P.} and \textsc{W\"uthrich, M. V.} (2018). Scale-free percolation in continuum space. {\em  Commun. Math. Stat.} \textbf{7} 269--308.
	
	\bibitem{Drees2018} \textsc{Drees, H.}, \textsc{Jan{\ss}en, A.}, \textsc{Resnick, S. I.} and \textsc{Wang, T.} (2020). On a minimum distance procedure for threshold selection in tail analysis. {\em SIAM J. Math. Data Sci.} \textbf{2} 75--102.
	
	\bibitem{K73} \textsc{Kallenberg, O.} (1973). Characterization and convergence of random measures and point processes. {\em Probab. Theory Relat. Fields} \textbf{27} 9--21.
	
	\bibitem{K06} \textsc{Kallenberg, O.} (2002). {\em Foundations of modern probability}. Springer, New York.
	
	\bibitem{K17} \textsc{Kallenberg, O.} (2017). {\em Random measures, theory and applications}. Springer, Cham.
	
	\bibitem{M08}\textsc{M\"uller, T.} (2008). Two-point concentration in random geometric graphs. {\em Combinatorica} \textbf{28} 529--545.
	
	\bibitem{NR06}\textsc{Norros, I.} and \textsc{Reittu, H.} (2006). On a conditionally Poissonian graph process. {\em Adv. Appl. Probab.} \textbf{38} 59--75.
	
	\bibitem{P03} \textsc{Penrose, M.} (2003). {\em Random geometric graphs}. Oxford University Press, Oxford.
	
	\bibitem{Re87} \textsc{Resnick, S. I.} (1987). {\em Extreme values, regular variation and point processes}. Springer, New York.
	
	\bibitem{Re07}\textsc{Resnick, S. I.} (2007). {\em Heavy-tail phenomena: probabilistic and statistical modeling}. Springer, New York.
	
	
	\bibitem{EskerHofstadHooghiemstra} \textsc{van den Esker, H.}, \textsc{van der Hofstad, R.} and \textsc{Hooghiemstra, G.} (2008). Universality for the distance in finite variance random graphs. {\em J. Stat. Phys.} \textbf{133} 169--202.
	
	\bibitem{H16} \textsc{van der Hofstad, R.} (2017). {\em Random graphs and complex networks}, Vol. 1. Cambridge University Press, Cambridge.
	
	\bibitem{Voitalov18} \textsc{Voitalov, I.}, \textsc{van der Hoorn, P.}, \textsc{van der Hofstad, R.} and \textsc{Krioukov, D.} (2019). Scale-free networks well done. {\em Phys. Rev. Res.} \textbf{1} 033034.
	
	\bibitem{WanWangDavisResnick} \textsc{Wan, P.}, \textsc{Wang, T.}, \textsc{Davis, R. A.} and \textsc{Resnick, S. I.} (2020). Are extreme value estimation methods useful for network data? {\em Extremes} \textbf{23} 171--195.
	
	\bibitem{RW19} \textsc{Wang, T.} and \textsc{Resnick, S. I.} (2019). Consistency of Hill estimators in a linear preferential attachment model. {\em Extremes} \textbf{22} 1--28.
	
	\bibitem{RW18} \textsc{Wang, T.} and \textsc{Resnick, S. I.} (2020). Degree growth rates and index estimation in a directed preferential attachment model. {\em Stochastic Process. Appl.} \textbf{130} 878--906.
	
	\bibitem{Y06}\textsc{Yukich, J. E.} (2006). Ultra-small scale-free geometric networks. {\em J. Appl. Probab.} \textbf{43} 665--677.	
\end{thebibliography}
\end{document}